
\documentclass[a4paper,11pt]{article}

\usepackage[numbers]{natbib}

\usepackage[
left=2cm,      
right=2cm,     
top=2cm,         
bottom=2cm       
]{geometry}

\usepackage{graphicx}%
\usepackage{multirow}%
\usepackage{amsmath,amssymb,amsfonts}%
\usepackage{amsthm}%
\usepackage{mathrsfs}%
\usepackage[title]{appendix}%
\usepackage{xcolor}%
\usepackage{textcomp}%
\usepackage{manyfoot}%
\usepackage{booktabs}%
\usepackage{algorithm}%
\usepackage{algorithmicx}%
\usepackage{algpseudocode}%
\usepackage{listings}%
\usepackage{arydshln} 
\usepackage{ulem}
\newtheorem{theorem}{Theorem}
\newtheorem{lemma}[theorem]{Lemma}%
\newtheorem{remark}{Remark}%

\DeclareMathOperator*{\argmin}{argmin}
\renewcommand{\Re}{\mathbb{R}}
\newcommand{\ra}[1]{\renewcommand{\arraystretch}{#1}}
\setlength{\tabcolsep}{10pt}

\definecolor{csf}{gray}{0.0}
\definecolor{gm}{gray}{0.5}
\definecolor{wm}{gray}{0.8}


\def\bc{\mathbf{c}}

\def\bg{\mathbf{g}}

\def\bm{\mathbf{m}}

\def\bu{\mathbf{u}}
\def\bv{\mathbf{v}}
\def\bw{\mathbf{w}}
\def\bx{\mathbf{x}}

\def\bz{\mathbf{z}}

\def\nn{\textrm{n}}

\def\blambda{\boldsymbol{\lambda}}

\def\bxi{\boldsymbol{\xi}}
\def\bzeta{\boldsymbol{\zeta}}

\def\bN{\mathbb{N}}

\usepackage{authblk}    
\usepackage{orcidlink}  
\usepackage{hyperref}   
\setlength{\affilsep}{1em} 

\graphicspath{{./figures/},{./}}


\title{A three-step framework for noisy image segmentation in brain MRI}


\author[1]{Laura Antonelli\orcidlink{0000-0002-4031-099X}\thanks{These authors contributed equally to this work.}}

\author[1,2]{Valentina De Simone\orcidlink{0000-0002-3357-5252}$^*$}

\author[3]{Marco Viola\orcidlink{0000-0002-2140-8094}$^*$\thanks{Corresponding author.  Email: marco.viole@dcu.ie}}


\affil[1]{Institute for High-Performance Computing and Networking, CNR,
	
	Via Pietro Castellino, 111, 80131 Naples, Italy}

\affil[2]{Department of Mathematics and Physics, University of Campania ``L. Vanvitelli'',
	
	viale Abramo Lincoln, 5, 81100 Caserta, Italy}

\affil[3]{School of Mathematical Sciences, Dublin City University,
	
	 Collins Avenue Extension, D09 V209 Dublin, Ireland}

\date{}

\begin{document}

\maketitle

\begin{abstract}
Magnetic Resonance Imaging (MRI) is essential for noninvasive generation of high-quality images of human tissues. Accurate segmentation of MRI data is critical for medical applications like brain anatomy analysis and disease detection. However, challenges such as intensity inhomogeneity, noise, and artifacts complicate this process.
To address these issues, we propose a three-step framework exploiting the idea of Cartoon-Texture evolution to produce a denoised and debiased MR image. The first step involves identifying statistical information about the nature of the noise using a suitable image decomposition. In the second step, a multiplicative intrinsic component model is applied to a smother version of the image, simultaneously reconstructing the bias and removing noise using noise information from the previous step. At the final step, standard clustering techniques are used to create an accurate segmentation.
Additionally, we present a convergence analysis of the ADMM scheme for solving the nonlinear optimization problem with multiaffine constraints resulting from the second step. Numerical tests demonstrate the effectiveness of our framework, especially in noisy brain segmentation, both from a qualitative and a quantitative viewpoint, compared to similar methods. 

\textbf{Keywords:} magnetic resonance image, segmentation, image decomposition, multiaffine ADMM

\end{abstract}

\section{Introduction}\label{sec:intro}

Magnetic Resonance Imaging (MRI) is a critical and widely used medical modality due to its noninvasive nature and ability to produce high-quality images of human organs and tissues. Segmentation in MRI refers to the process of dividing the acquired image data into specific tissues or regions of interest (ROIs). In the context of brain slice MRI data, this often involves distinguishing between cerebrospinal fluid (CSF), gray matter (GM), and white matter (WM), in 2D image slices as depicted in Fig. \ref{fig:brainGT-CTD}. Research topics on brain anatomy and functionality (e.g., Alzheimer's disease, tumor detection, neurodegenerative processes, etc.) demand reliable segmentation tools since their outcomes directly impact the subsequent analysis of the brain slice MRI. Therefore, the development of any accurate MRI segmentation framework has to deal with the following issues:
\begin{itemize}
\item \textit{bias field level}: intensity inhomogeneity across the image hamper distinguishing the adjacent brain tissues;
\item \textit{noise}: Rician noise corrupts brain MRI slices due to electron thermal agitation in imaging machines, but other noise types, like random and structured noise, can be present;
\item \textit{visual artifacts}: other artifacts undermine the scans' light intensity due to body motion, blood flow, and tissues' susceptibility to magnetic fields.
\end{itemize}

\begin{figure}[ht]
\centering
\begin{tabular}{cccccc}
\includegraphics[width=.12\columnwidth]{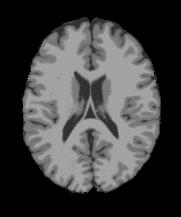} &
\includegraphics[width=.12\columnwidth]{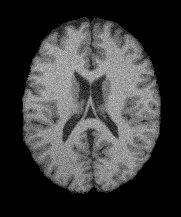} &
\includegraphics[width=.12\columnwidth]{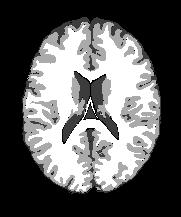} &
\includegraphics[width=.12\columnwidth]{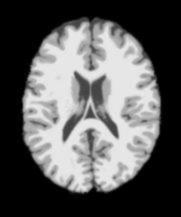} &
\includegraphics[width=.12\columnwidth]{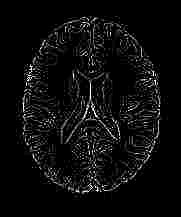}\\
\footnotesize \texttt{brain MRI slice} & \footnotesize \texttt{artifacts} & \footnotesize \texttt{ground truth} & \footnotesize \texttt{cartoon} & \footnotesize \texttt{texture} \\
\end{tabular}
\caption{The original brain MRI coming from \textsc{BrainWeb} database (slice n. 91), followed by (from left to right) its corrupted version with noise and bias, the ground truth of the brain selected  ROIs: cerebrospinal fluid in \textit{dark grey},  gray matter in \textit{grey},  and white matter in \textit{white}. The last two images depict cartoon and texture components of the MRI scan, respectively.}\label{fig:brainGT-CTD}
\end{figure}
\noindent Traditional segmentation techniques~\cite{bib:ANUF2022}, like the K-means clustering algorithm, often struggle when dealing with noise and artifacts described above in image intensity~\cite{zheng2018image}. To effectively apply these techniques, it is necessary to perform bias field correction either as a separate preprocessing step or as part of the MRI segmentation process~\cite{Li2020AMM}.
In addition, preprocessing filters or regularization techniques are applied to handle the noise susceptibility of the MRI segmentation methods, aiming to remove high-frequency components~\cite{PANKAJ2021102737}.  
The first ones often rely on the grey-level histogram, while others incorporate spatial image information to handle noise and artifacts. 
On the upside, continued advances in medical imaging technologies result in new application-specific challenges for segmentation, and standard and new methods are continuously explored, modified, and introduced~\cite{new2,new1}.
In this connection, starting from a previous related work from the same team presented in~\cite{bib:LNCS2023}, we propose SegMIC2T (Segmentation using Multiplicative Component Optimization and Cartoon-Texture evolution), a three-step framework for image segmentation in brain MRI corrupted by noise and artifacts. In detail:
\begin{itemize}
    \item[\textbf{Step 1:}] Let $I$ be the MRI slice to be segmented, the first step computes $\bar{\bv}$, a matrix of the same size as the original image containing approximate information of the noise distribution on $I$; this could be done, e.g., by applying non-linear low-pass and high-pass filters.
This step results in a decomposition of the original image $I$ in two parts, i.e., 
\begin{equation}\label{eq:ctd}
    I = \bar{I} + \bar{\bv},
\end{equation}
\noindent where $\bar{I}$ can be interpreted as a smoother version of $I$. 
With a little abuse of notation, we indicate the equation \eqref{eq:ctd} as a cartoon-texture (CT) decomposition (even though we do not aim to compute a proper CT decomposition), where $\bar{I}$ is the `cartoon' depicting the structural component, and $\bar{\bv}$ the `texture' part including oscillatory components, like texture and noise, as shown in Figure~\ref{fig:brainGT-CTD}.

\item[\textbf{Step 2:}] In the second step, the CT evolution strategy~\cite{bib:ctetris}
is applied to the multiplicative intrinsic component optimization model~\cite{MICO} (MICO), enabling the bias field reconstruction and the cartoon denoising simultaneously. Our strategy modifies the MICO model, allowing further oscillating component extraction from the smoother MRI slice by using statistical information on the noise localization. 

\item[\textbf{Step 3:}] In the third step, any standard segmentation technique can divide the tissue regions quickly, starting from the bias-corrected and denoised MRI slice. 
\end{itemize}

From the mathematical viewpoint, the CT evolution changes MICO model in a nonlinear constrained optimization problem subject to multiaffine constraints. Following \cite{bib:ADMMmultiaffine}, we introduce an Alternating Direction Method of Multipliers scheme (ADMM), proving some convergence results. 
Finally, several numerical tests exhibiting the performance of the proposed framework paired with standard K-means clustering in brain MRI segmentation are presented.

The rest of the paper is outlined as follows. In section~\ref{sec:MIC2T},  we introduce the new optimization model for MRI correction from bias and noise based on the CT evolution strategy in the MICO model. Then, an ad-hoc ADMM scheme is developed for the 
 minimization of the proposed problem. In section~\ref{sec:proof}, the convergence analysis of the ADMM scheme is detailed, and in section~\ref{sec:test}, numerical implementation and experiments are discussed. We draw some conclusions in section~\ref{sec:conc}.

 
\section{A CT evolution strategy for MRI correction}\label{sec:MIC2T}
In this section we introduce the mathematical model on which \textbf{Step 2} of the proposed strategy is based. For the sake of simplicity, we adopt the \emph{first discretize then optimize} \cite{bib:ANUF2022} approach, where the image domain consists of a grid of $ P = n_x \times n_y$ pixels,
\( \Omega = \{ (i_x,i_y) : \, i_x=1,...,n_x, \; i_y=1,...,n_y\} \), and consequently an image $I$ is a matrix of pixels with values in $[0,\infty)$.
We also assume the following multiplicative model for an MRI slice \cite{bib:VariationalMS}:
\begin{equation}\label{eq:MultIntrComp}
    I(\bxi) = b(\bxi)J(\bxi) + \nn(\bxi), \, \textrm{with} \,  \bxi \equiv (i_x,i_y) \in\Omega,
\end{equation}
where $J(\bxi)$ represents the original image, $b(\bxi)$ the bias field, and $n(\bxi)$ represents noise. Assuming as in \cite{MICO} that the brain is made of $N$ different regions of interest (i.e., tissues, in general, $N=3$), characterized by a uniform color $c_i$, one can represent the original image as a piecewise constant function, i.e., $ J(\bxi) = \sum_{i=1}^N c_i\cdot u_i(\bxi)$ with $u_i(\bxi):\Omega\rightarrow\{0,1\}$ {and $\sum_j u_j(\bxi) =1$}. Since the bias field is considered to be a smooth function, it can be represented using linear combinations of smooth functions from a specified basis of size $M$, i.e, $\{g^1(\bxi),\,\ldots,\,g^M(\bxi)\}$ (consisting, e.g., of polynomials or splines) with coefficients $\bw\in\Re^M$, i.e., $b(\bxi) = \sum_{j=1}^M w_j\cdot g^j(\bxi)$. Therefore, in this discretized setting, the bias field can be represented as a linear combination of $M$ vectors $\bg^1, \ldots, \bg^M\in\Re^P$, comprising the evaluations of the
$M$ smooth basis functions at each pixel.
Typical values for $M$ are $10$ or $20$. This results in a pixel-wise expression of the bias field as $\bw^TG_p\equiv\sum_j w_j g^j_p$. For each pixel, we also define the vector $\bu_p\in\Re^N$, representing the pixel's membership in one of the $N$ brain regions. Here, we will employ \emph{fuzzy} membership functions, meaning that for each pixel $\bu_p$ has elements in $[0,1]$ which sum up to $1$ rather than being a vector in $\{0,1\}^N$ (i.e., a vector with a single nonzero entry equal to 1).\\
\noindent To apply the CT-evolution strategy in MICO energy formulation \cite[eq. (6)]{MICO}, we consider a decomposition of the MRI slice $I$ as defined in \eqref{eq:ctd}, with $\bar{I}$ containing the piecewise-constant regions with contours and $\bar{\bv}$ the oscillating components. We remark, that the latter has the role of keeping approximate information on where noise is located in the original image.
\noindent By defining the energy potential
\begin{equation}\label{eq:MICO_potential}
    F(\bu,\bc,\bw) = \sum_{p=1}^{P} \frac{1}{2} \left( I_p - \sum_{i=1}^N \bw^\top G_p c_i(\bu_p)_i \right)^2,    
\end{equation}
reflecting the distance between the original image and the multiplicative intrinsic components model,
we can introduce our modified MICO model as follows:
\begin{equation}\label{eq:MIC2T}
    \begin{array}{rcl}
         \min\limits_{\bu,\bc,\bw,\bv}          &\;& \displaystyle F(\bu,\bc,\bw) + \mu \mathcal{D}(\bv;\bar{\bv}) \\
         \mathrm{s.t.} & & \displaystyle 0 \leq (\bu_p)_i \leq 1,\quad \forall i=1,\ldots,N,\; \forall p = 1,\ldots,P\\
                       & & \displaystyle \sum_{i=1}^N (\bu_p)_i = 1,\quad \forall p = 1,\ldots,P,\\
                       & & \displaystyle \bw^\top G_p \sum_{i=1}^N c_i(\bu_p)_i + v_p = \bar{I}_p,\quad \forall p = 1,\ldots,P
    \end{array}
\end{equation}
where $\mu$ is a positive parameter, and $\mathcal{D}$ penalises the distance between $\bv$ and $\bar{\bv}$. Our model performs the reconstruction of the multiplicative components of the structural part, $\bar{I}$, and, at the same time, attempts to extract from it the oscillatory part remnant from $\bv$. 
Since noise and general oscillatory components can be modeled as probability distributions, one could think to force the similarity between $\bv$ and $\bar{\bv}$ through the Kullback-Leibler (KL) divergence, i.e., choosing
$$\mathcal{D}(\bv;\bar{\bv}) = D_{KL}(\bv;\bar{\bv}) := \sum_{p} v_{p} \log \left( \frac{v_{p}}{\bar{v}_{p}}\right) + \bar{v}_{p} - v_{p}.$$
Although the KL divergence is not a distance, it penalizes $\bv$ for having large values where $\bar{\bv}$ is small; in this way, $\bv$ is forced to have similar support of $\bar{\bv}$ (or, in other words, pushing the method to ``extract'' noise only in those parts of the image where it was originally detected). Observe that in this case, since $\bar{\bv}$ might have negative entries, one should slightly modify the definition of the KL divergence to guarantee its convexity. An additional drawback is represented by the fact that KL divergence has no Lipschitz-continuous gradient. To cope with this later issue, differently from what we did in the preliminary related work \cite{bib:LNCS2023}, we propose to replace the KL with its second-order Taylor expansion around $\bar{\bv}$. In detail, given $x,y>0$, one has that
$$ D_{KL}(x;y) \approx D_{KL}(y;y) + D_{KL}'(y;y)\,(x-y) + \frac{1}{2}D_{KL}''(y;y)\,(x-y)^2 = \frac{1}{2y}(x-y)^2. $$
Hence, we suggest to use
\begin{equation}\label{eq:define_distance_vbar}
    \mathcal{D}(\bv;\bar{\bv}) = \sum_{p} \frac{\gamma_p}{2} (v_{p} - \bar{v}_{p})^2 = \frac{1}{2}\left\Vert \bv-\bar{\bv}\right\Vert^2_\Gamma,    
\end{equation}
where $\Gamma = \mathrm{diag}(\gamma_1,\ldots,\gamma_P)$ and $$\gamma_p = \mathrm{mid}\left\{\varepsilon,\frac{1}{\bar{v}_{p}},\frac{1}{\varepsilon}\right\}, \quad (p=1,\ldots,P)$$
for a given $\varepsilon\in(0,1)$.
In simple words, the derived penalty $\mathcal{D}$ corresponds to a weighted Euclidean distance, which, similarly to the KL divergence, forces $|v_{p}|$ to be small whenever $|\bar{v}_{p}|$ is small, i.e., forces $\bv$ to have similar support to $\bar{\bv}$.

Noting that in the objective function of \eqref{eq:MICO_potential} the quantity in the parentheses equals $v_p$ and the first two constraints
can be equivalently expressed as $\bu_p \in \Delta\;(\forall p)$, where $\Delta$ represents the standard simplex in $\Re^3$, equation \eqref{eq:MIC2T} can be rewritten as 
\begin{equation}\label{eq:MICCT-mod}
    \begin{array}{rcl}
         \min\limits_{\bu,\bc,\bw,\bv}   &\;& \displaystyle \chi_\Delta(\bu) + \frac{1}{2}\|\bv\|^2 + \frac{\mu}{2}\left\Vert \bv-\bar{\bv}\right\Vert^2_\Gamma \\
         \mathrm{s.t.} & & \displaystyle \bw^\top G_p \,\bc^\top \bu_p + v_p = \bar{I}_p,\quad \forall p = 1,\ldots,P,
    \end{array}
\end{equation}
where we denoted $\chi_\Delta(\bu) \equiv \sum_{p=1}^P \chi_\Delta(\bu_p)$, and $\chi_\Delta$ represent the characteristic function of the three-dimensional simplex.
Problem \eqref{eq:MICCT-mod} is a nonlinear, nonconvex optimization problem, with the nonconvexity stemming from the multiaffine constraints, which are nonlinear constraints that remain linear with respect to each individual unknown.

To solve the optimization problem \eqref{eq:MICCT-mod} one can resort to a specialized Alternate Directions Method of Multipliers (ADMM) recently introduced and analyzed in \cite{bib:ADMMmultiaffine} for the case of multiaffine constraints. To facilitate the method presentation and convergence analysis, we will reformulate it as follows
\begin{equation}\label{eq:MICCT_multiaffine}
    \begin{array}{rcl}
         \min\limits_{\bu,\bc,\bw,\bv}  &\;& \displaystyle \phi(\bu,\bc,\bw,\bv) :=  \chi_\Delta(\bu) + g(\bv)\\
         \mathrm{s.t.} & & \displaystyle \mathcal{M}(\bu,\bc,\bw) + \bv = 0,
    \end{array}
\end{equation}
where $g(\bv)= \frac{1}{2}\|\bv\|^2 + \frac{\mu}{2}\left\Vert \bv-\bar{\bv}\right\Vert^2_\Gamma$ and $\mathcal{M}(\bu,\bc,\bw)$ describes the multiaffine part of the constraints in \eqref{eq:MICCT-mod}. It is worth observing that $\chi_\Delta(\bu)$ is a proper convex and lower semi-continuous function while $g(\bv)$ is $(\sigma_g,\,L_g)-$strongly convex, i.e., it is strongly convex with strong convexity constant $\sigma_g$ and its gradient is Lipshitz-continuous with Lipschitz constant $L_g$, where
\begin{equation}\label{eq:g_sig_L_bounds}
    \sigma_g=\frac{1}{2}\left(1+\mu\min_p{\gamma_p}\right) \ge \frac{1+\mu\varepsilon}{2}, \qquad L_g = 1+\mu\max_p{\gamma_p} \le 1+\frac{\mu}{\varepsilon}.    
\end{equation}

\noindent Given $\rho>0$, the augmented Lagrangian of \eqref{eq:MICCT_multiaffine} is 
\begin{equation} \label{eq:aug_lagr_multiaffine}
    \mathcal{L}_{\rho}(\bu,\bc,\bw,\bv,\blambda) = \phi(\bu,\bc,\bw,\bv) + \left\langle\blambda,\mathcal{M}(\bu,\bc,\bw) + \bv\right\rangle + \frac{\rho}{2}\left\|\mathcal{M}(\bu,\bc,\bw) + \bv\right\|^2,
\end{equation}
where the vector $\blambda\in\Re^P$ represents the Lagrange multipliers. An ADMM scheme can then be defined as follows: given a choice for $(\bu^0,\bc^0,\bw^0,\bv^0,\blambda^0)$, for each $k$ one obtains the new iterate $(\bu^k,\bc^k,\bw^k,\bv^k,\blambda^k)$ as
\begin{align}
    \bu^{k+1} & = \displaystyle \argmin\limits_{\bu} \mathcal{L}_{\rho}(\bu,\bc^k,\bw^k,\bv^k,\blambda^k),\label{eq:admm_method_multiaffine_U}\\
    \bc^{k+1} & = \displaystyle \argmin\limits_{\bc} \mathcal{L}_{\rho}(\bu^{k+1},\bc,\bw^k,\bv^k\blambda^k),\label{eq:admm_method_multiaffine_C}\\
    \bw^{k+1} & = \displaystyle \argmin\limits_{\bw} \mathcal{L}_{\rho}(\bu^{k+1},\bc^{k+1},\bw,\bv^k,\blambda^k),\label{eq:admm_method_multiaffine_W}\\
    \bv^{k+1} & = \displaystyle \argmin\limits_{\bv} \mathcal{L}_{\rho}(\bu^{k+1},\bc^{k+1},\bw^{k+1},\bv,\blambda^k),\label{eq:admm_method_multiaffine_V}\\
    \blambda^{k+1} & = \displaystyle \blambda^k + \rho\left(\mathcal{M}(\bu^{k+1},\bc^{k+1},\bw^{k+1}) + \bv^{k+1}\right).\label{eq:admm_method_multiaffine_lambda}
\end{align}

\subsection{ADMM subproblems solution}
In this section, we present the solution of the problems  \eqref{eq:admm_method_multiaffine_U}--\eqref{eq:admm_method_multiaffine_lambda}. First, by introducing scaled Lagrange multipliers vectors $\bzeta^k = \frac{1}{\rho}\blambda^k$, one can check that the ADMM subproblems can be rewritten as follows:
\begin{align}
    \bu^{k+1} =\;& \displaystyle \argmin\limits_{\bu} \; 
\sum_{p=1}^P\left[ \chi_\Delta(\bu_p) + \frac{\rho}{2}\left(G_p^\top\bw^k \,\bu_p^\top \bc^k + v_p^k - \bar{I}_p + \zeta_p^k\right)^2\right], \label{eq:admm_method_mod_Usubp}\\
    \bc^{k+1} =\;& \displaystyle \argmin\limits_{\bc} \;\left\|\mathcal{M}(\bu^{k+1},\bc,\bw^k) + \bv^k + \bzeta^k\right\|^2,\label{eq:admm_method_mod_Csubp}\\
    \bw^{k+1} =\;& \displaystyle \argmin\limits_{\bw} \;\left\|\mathcal{M}(\bu^{k+1},\bc^{k+1},\bw) + \bv^k + \bzeta^k\right\|^2, \label{eq:admm_method_mod_Wsubp}\\
    \bv^{k+1} =\;& \displaystyle \argmin\limits_{\bv} \; g(\bv) + \frac{\rho}{2}\left\|\mathcal{M}(\bu^{k+1},\bc^{k+1},\bw^{k+1}) + \bv + \bzeta^k\right\|^2,\label{eq:admm_method_mod_Vsubp}\\
    \bzeta^{k+1} =\;& \displaystyle \bzeta^k + \mathcal{M}(\bu^{k+1},\bc^{k+1},\bw^{k+1}) + \bv^{k+1}.\label{eq:admm_method_mod_Lupdate}
\end{align}

\noindent Problem \eqref{eq:admm_method_mod_Usubp} 
can be separated into $P$ independent problems with dimension $N$. Namely, for each $p$ one can update $\bu_p$ by solving
$$ \min\limits_{\bu_p\in\Delta} \; \left(\bm_p^\top \bu_p + l_p\right)^2,$$
where $\bm_p=G_p^\top\bw^k  \,\bc^k$ and $l_p=v^k_p -\bar{I}_p+\zeta_p^k$. Alternatively, the $P$ problems can be solved simultaneously as a unified quadratic programming problem of size $N\times P$ where the $P$ unitary simplex constraints can be replaced by nonnegativity constraints and linear equality constraints. In the experiments, we find a solution using the Interior Point - Proximal Method of Multipliers (IP-PMM) \cite{bib:DeSimone2022sirev,bib:Pougkakiotis2021}.

Due to the multiaffine nature of $\mathcal{M}$, the update of $\bc$ and $\bw$ can be easily interpreted as solving small-scale least squares problems (typical dimensions are $N=3$ and $M =10\sim 20$). More in detail, \eqref{eq:admm_method_mod_Csubp} can be rewritten as
$$\bc^{k+1} = \displaystyle \argmin\limits_{\bc} \;\left\|A^k\bc + \bv^k - \bar{I} + \bzeta^k\right\|^2,$$
where $A^k\in\Re^{P\times N}$ has rows $A^k_p = G_p^\top\bw^k\, (\bu_p^{k+1})^\top $. Similarly, subproblem \eqref{eq:admm_method_mod_Wsubp} can be reformulated as
$$\bw^{k+1} = \displaystyle \argmin\limits_{\bw} \;\;\left\|B^k\bw + \bv^k - \bar{I} + \bzeta^k\right\|^2,$$
where $B^k\in\Re^{P\times M}$ has rows $B^k_p = (\bc^{k+1})^\top \bu_p^{k+1}\, G_p^\top $.
Exact solutions of the normal equation systems can be computed cheaply to perform the two updates.

A closed-form update can be devised for $\bv$ in \eqref{eq:admm_method_mod_Vsubp} by noting that the problem can be split into $P$ 1-dimensional quadratic optimization problems. In detail, each $v_p$ is the solution to
$$ \min_v \; \frac{v^2}{2} + \frac{\mu\gamma_p}{2}\left(v-\bar{v}_p\right)^2 + \frac{\rho}{2}\left(v+z_p\right)^2, $$
where $z_p = G_p^\top\bw^{k+1} \,(\bu^{k+1}_p)^\top \bc^{k+1} - \bar{I}_p + \zeta_p^k$. This implies that at each step, one can perform an exact update of $\bv$ by setting
$$ v_p = \displaystyle\frac{\mu\gamma_p\bar{v}_p-\rho z_p}{1+\mu\gamma_p+\rho}, \qquad p=1,\ldots,P. $$

\section{Convergence of the multi-step multiaffine ADMM scheme}\label{sec:proof}
Before analyzing the convergence properties of the algorithm defined by \eqref{eq:admm_method_multiaffine_U}--\eqref{eq:admm_method_multiaffine_lambda}, it is worth mentioning that, despite their successful application in various application fields, multi-block ADMM schemes (i.e., with 3 or more blocks) are not guaranteed to converge even in the case of linear constraints and convex objective (see, e.g., \cite{bib:Chen2016}).
In the following, we will adapt some of the results from \cite{bib:ADMMmultiaffine} to the specific case of the proposed ADMM scheme, which will help gain more insights into its theoretical properties. It is worth mentioning that the results in \cite{bib:ADMMmultiaffine} focus on an extremely generic family of problems. Since we are here focusing on a very specific mathematical model, the results and/or their proofs can be simplified. For this reason, for the sake of clarity and self-containment of this work, we will explicitly report the proofs whenever there is a difference from the results in \cite{bib:ADMMmultiaffine}.
Before proceeding, it is important to understand the limits of what can be proved.

\begin{remark}
    The results we are going to prove guarantee the convergence to a point satisfying certain stationarity conditions. However, the limiting point is not guaranteed to be the solution to \eqref{eq:MICCT_multiaffine}. As far as we know, this is the best that can be done in the case of multiaffine constraints. The theory in \cite{bib:ADMMmultiaffine}, for example, only covers convergence to a solution in the case some strengthened convexity conditions hold and/or all the primal variables are involved in at least one linear constraint.
\end{remark}

\bigskip
To improve the readability of the results to follow, we will use $\bx^k$ to denote the triple $(\bu^k,\bc^k,\bw^k)$ for all $k$; i.e., $\mathcal{M}(\bx^k)$ will be used in place of $\mathcal{M}(\bu^k,\bc^k,\bw^k)$.

\bigskip
\begin{lemma}\label{lem:auglagr_variations}
    Consider the ADMM scheme described in \eqref{eq:admm_method_multiaffine_U}--\eqref{eq:admm_method_multiaffine_lambda}.
    The update of $\bv$ in \eqref{eq:admm_method_multiaffine_V} decreases the augmented Lagrangian by the quantity
    \begin{equation}\label{eq:auglagr_v_decrease}
        \Delta^k_\bv\mathcal{L} = \mathcal{L}_{\rho}(\bx^{k+1},\bv^k,\blambda^k) - \mathcal{L}_{\rho}(\bx^{k+1},\bv^{k+1},\blambda^k) = \frac{1}{2}\left\Vert\bv^{k+1}-\bv^{k}\right\Vert^2_S,
    \end{equation}
    with $S=(1+\rho)I+\mu\Gamma$. 
    The update of the Lagrangian multipliers in \eqref{eq:admm_method_multiaffine_lambda} increases the augmented Lagrangian by the quantity
    \begin{equation}\label{eq:auglagr_lambda_increase}
        \Delta^k_{\blambda}\mathcal{L} = \mathcal{L}_{\rho}(\bx^{k+1},\bv^{k+1},\blambda^{k+1}) - \mathcal{L}_{\rho}(\bx^{k+1},\bv^{k+1},\blambda^k) = \frac{1}{\rho}\left\Vert\blambda^{k+1}-\blambda^{k}\right\Vert^2.
    \end{equation}
    Furthermore, if $\left\Vert\blambda^{k+1}-\blambda^{k}\right\Vert\rightarrow 0$ then $\nabla_{\blambda} \mathcal{L}_{\rho}(\bx^{k},\bv^k,\blambda^{k})\rightarrow 0$ and every limit point $(\bx^*,\bv^*)$ of $\left\{(\bx^k,\bv^k)\right\}_{k\in\bN}$ satisfies the constraints in \eqref{eq:MICCT_multiaffine}.
\end{lemma}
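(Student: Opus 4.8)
The plan is to treat the three claims separately, exploiting in each case the explicit structure of $\mathcal{L}_{\rho}$ in the block being updated: for \eqref{eq:auglagr_v_decrease} that the $\bv$-subproblem is an \emph{exact} strongly convex quadratic; for \eqref{eq:auglagr_lambda_increase} that $\mathcal{L}_{\rho}$ is \emph{affine} in $\blambda$; and for the final assertion that $\nabla_{\blambda}\mathcal{L}_{\rho}$ is precisely the constraint residual, together with continuity of $\mathcal{M}$.

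First, for \eqref{eq:auglagr_v_decrease}, I would isolate the part of $\mathcal{L}_{\rho}(\bx^{k+1},\,\cdot\,,\blambda^k)$ that depends on $\bv$. Since $\chi_\Delta(\bu^{k+1})$ is constant in $\bv$, this reduces to
$$ h(\bv) := g(\bv) + \langle\blambda^k,\mathcal{M}(\bx^{k+1})+\bv\rangle + \frac{\rho}{2}\|\mathcal{M}(\bx^{k+1})+\bv\|^2, $$
and the two augmented-Lagrangian values in \eqref{eq:auglagr_v_decrease} differ exactly by $h(\bv^k)-h(\bv^{k+1})$. Because $g$ was chosen quadratic — this is precisely why the KL term was replaced by its Taylor expansion in \eqref{eq:define_distance_vbar} — the map $h$ is a quadratic with constant Hessian $\nabla^2 h = (I+\mu\Gamma)+\rho I = (1+\rho)I+\mu\Gamma = S$, where the $I$ comes from $\frac{1}{2}\|\bv\|^2$, the $\mu\Gamma$ from the weighted term, and the $\rho I$ from the penalty. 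For such a quadratic the exact expansion $h(\bv)-h(\bv^{k+1}) = \frac12\|\bv-\bv^{k+1}\|_S^2 + \langle\nabla h(\bv^{k+1}),\bv-\bv^{k+1}\rangle$ holds for every $\bv$, and since $\bv^{k+1}$ is the unconstrained minimizer we have $\nabla h(\bv^{k+1})=0$; taking $\bv=\bv^k$ leaves $h(\bv^k)-h(\bv^{k+1})=\frac12\|\bv^{k+1}-\bv^{k}\|_S^2$, as claimed.

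For \eqref{eq:auglagr_lambda_increase}, I would note that only the inner-product term of $\mathcal{L}_{\rho}$ depends on $\blambda$, so the two values differ by $\langle\blambda^{k+1}-\blambda^k,\,\mathcal{M}(\bx^{k+1})+\bv^{k+1}\rangle$; substituting the multiplier update \eqref{eq:admm_method_multiaffine_lambda} in the form $\mathcal{M}(\bx^{k+1})+\bv^{k+1}=\frac{1}{\rho}(\blambda^{k+1}-\blambda^k)$ immediately gives $\frac{1}{\rho}\|\blambda^{k+1}-\blambda^k\|^2$. For the final assertion, I would observe that $\nabla_{\blambda}\mathcal{L}_{\rho}(\bx^k,\bv^k,\blambda^k)=\mathcal{M}(\bx^k)+\bv^k$, which by \eqref{eq:admm_method_multiaffine_lambda} (shifted one index) equals $\frac{1}{\rho}(\blambda^k-\blambda^{k-1})$. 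Hence $\|\blambda^{k+1}-\blambda^k\|\to 0$ forces $\mathcal{M}(\bx^k)+\bv^k\to 0$. Passing to any convergent subsequence $(\bx^{k_j},\bv^{k_j})\to(\bx^*,\bv^*)$ and using that $\mathcal{M}$ is a multiaffine, hence continuous, polynomial map yields $\mathcal{M}(\bx^*)+\bv^*=0$, i.e. the constraint of \eqref{eq:MICCT_multiaffine} holds at every limit point.

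None of the three steps presents a genuine difficulty; the only points requiring care are bookkeeping. The main thing to get right is the identification of the Hessian $S=(1+\rho)I+\mu\Gamma$ and, crucially, the fact that $h$ is \emph{exactly} quadratic, so that the descent relation is an equality rather than a mere inequality — this is exactly what the Taylor replacement of the KL divergence buys us. Beyond that one needs only the elementary observation that $\mathcal{L}_{\rho}$ is affine in $\blambda$ with gradient equal to the residual, and the continuity of $\mathcal{M}$ to transfer the residual-to-zero property to limit points.
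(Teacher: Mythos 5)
Your proposal is correct and takes essentially the same route as the paper: equality \eqref{eq:auglagr_v_decrease} from the exact quadratic expansion of the $\bv$-part of $\mathcal{L}_\rho$ (Hessian $S=(1+\rho)I+\mu\Gamma$, gradient vanishing at the minimizer $\bv^{k+1}$), equality \eqref{eq:auglagr_lambda_increase} from affinity of $\mathcal{L}_\rho$ in $\blambda$ plus the multiplier update, and the limit-point claim from $\nabla_{\blambda}\mathcal{L}_\rho$ being the constraint residual together with continuity of $\mathcal{M}$. The only cosmetic difference is that the paper obtains the first identity by invoking Lemma~6.8 of \cite{bib:ADMMmultiaffine} before exploiting quadraticity of $g$, whereas you derive it self-containedly by expanding the quadratic map $h$ directly; the underlying computation is the same.
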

\begin{proof}
    To prove \eqref{eq:auglagr_v_decrease} we first recall Lemma~6.8 in \cite{bib:ADMMmultiaffine}, thanks to which one can prove that
    $$  \Delta^k_\bv\mathcal{L} = g(\bv^{k}) - g(\bv^{k+1}) - \nabla g(\bv^{k+1})^\top(\bv^{k}-\bv^{k+1}) + \frac{\rho}{2}\left\|\bv^{k}-\bv^{k+1}\right\|^2.$$
    By observing that $g(v)$ is quadratic, i.e., it coincides with its second-order Taylor expansion, one can write
    \begin{eqnarray*}
        g(\bv^{k}) - g(\bv^{k+1}) - \nabla g(\bv^{k+1})^\top(\bv^{k}-\bv^{k+1}) &=& \frac{1}{2}(\bv^{k}-\bv^{k+1})^\top\nabla^2 g(\bv^{k+1})(\bv^{k}-\bv^{k+1})=\\
        &=& \frac{1}{2}(\bv^{k}-\bv^{k+1})^\top \left[I+\mu\Gamma\right] (\bv^{k}-\bv^{k+1}).
    \end{eqnarray*}
    This implies
    \begin{eqnarray*}
        \Delta^k_\bv\mathcal{L} &=& \frac{1}{2}(\bv^{k}-\bv^{k+1})^\top \left[I+\mu\Gamma\right] (\bv^{k}-\bv^{k+1}) + \frac{\rho}{2}\left\|\bv^{k}-\bv^{k+1}\right\|^2\\
        &=& \frac{1}{2}(\bv^{k}-\bv^{k+1})^\top \left[(1+\rho)I+\mu\Gamma\right] (\bv^{k}-\bv^{k+1}),    
    \end{eqnarray*}
    hence, \eqref{eq:auglagr_v_decrease} is proved.

The proof of \eqref{eq:auglagr_lambda_increase} comes from
$$ \Delta^k_{\blambda}\mathcal{L} = \left\langle \blambda^{k+1} - \blambda^{k},\,\mathcal{M}(\bx^{k+1}) + \bv^{k+1}\right\rangle = \rho\left\Vert\mathcal{M}(\bx^{k+1}) + \bv^{k+1}\right\Vert^2 = \frac{1}{\rho}\left\Vert\blambda^{k+1}-\blambda^{k}\right\Vert^2,$$
where we made use of \eqref{eq:admm_method_multiaffine_lambda}.
By observing that
$$\nabla_{\blambda} \mathcal{L}_\rho(\bx^{k+1},\bv^{k+1},\blambda^{k+1}) = \mathcal{M}(\bx^{k+1}) + \bv^{k+1} = \frac{1}{\rho}\left( \blambda^{k+1}-\blambda^{k} \right), $$
it follows that if $\left\Vert\blambda^{k+1}-\blambda^{k}\right\Vert\rightarrow 0$ then $\nabla_{\blambda} \mathcal{L}_{\rho}(\bx^{k},\bv^k,\blambda^{k})\rightarrow 0$. By continuity, this implies
$$ \nabla_{\blambda} \mathcal{L}_\rho(\bx^*,\bv^*,\blambda^*) = \mathcal{M}(\bx^*) + \bv^* = 0, $$
i.e., the thesis.
\end{proof}

We now introduce a couple of lemmas which will show that the sequence $\left\{\mathcal{L}_{\rho}(\bx^{k},\bv^k,\blambda^k)\right\}$ generated by ADMM converges for sufficiently large values of $\rho$. To ease the notation, we are indicating $\mathcal{L}_{\rho}^{k}=\mathcal{L}_{\rho}(\bx^{k},\bv^k,\blambda^k)$.

\bigskip
\begin{lemma}\label{lem:decreasing_aug_lagr}
    For sufficiently large $\rho$ we have that
    $$ \mathcal{L}_{\rho}(\bx^{k+1},\bv^k,\blambda^k) - \mathcal{L}_{\rho}(\bx^{k+1},\bv^{k+1},\blambda^{k+1}) \geq 0 $$
    and the sequence $\left\{\mathcal{L}_{\rho}^{k}\right\}_k$ is monotonically decreasing. Moreover, for sufficiently small $\delta$, we may choose $\rho$ such that
    \begin{equation}\label{eq:aug_lagr_decr_low_bound}
        \mathcal{L}_{\rho}^k - \mathcal{L}_{\rho}^{k+1}\geq\delta\left\|\bv^{k+1}-\bv^{k}\right\|^2.    
    \end{equation}
\end{lemma}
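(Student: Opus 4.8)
The idea is to track how $\mathcal{L}_\rho$ changes through the five block updates of one ADMM sweep and to show that the single step that \emph{raises} $\mathcal{L}_\rho$, namely the multiplier update \eqref{eq:admm_method_multiaffine_lambda}, is dominated by the decrease guaranteed by the $\bv$ update once $\rho$ is large enough. Writing the full variation as a telescoping sum over the updates of $\bu$, $\bc$, $\bw$, $\bv$ and $\blambda$, the first three differences are nonnegative because $\bu^{k+1}$, $\bc^{k+1}$ and $\bw^{k+1}$ are each exact minimizers of $\mathcal{L}_\rho$ in their block, so that
$$\mathcal{L}_\rho^k - \mathcal{L}_\rho(\bx^{k+1},\bv^k,\blambda^k) \ge 0.$$
By Lemma~\ref{lem:auglagr_variations}, the remaining two steps contribute $\Delta^k_\bv\mathcal{L} = \tfrac12\|\bv^{k+1}-\bv^k\|_S^2 \ge 0$ (a decrease) and $\Delta^k_{\blambda}\mathcal{L} = \tfrac1\rho\|\blambda^{k+1}-\blambda^k\|^2 \ge 0$ (an increase). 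Thus the whole difficulty reduces to absorbing the multiplier term $\tfrac1\rho\|\blambda^{k+1}-\blambda^k\|^2$ into $\Delta^k_\bv\mathcal{L}$.

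The key step, and the main obstacle, is to bound the dual increment by the primal increment in $\bv$. Because $\bv$ is the last block updated before $\blambda$ and enters the constraint linearly with the identity as its coefficient, the first-order optimality condition of the $\bv$-subproblem \eqref{eq:admm_method_multiaffine_V}, namely $\nabla g(\bv^{k+1}) + \blambda^k + \rho(\mM(\bx^{k+1})+\bv^{k+1}) = 0$, combines with the multiplier update \eqref{eq:admm_method_multiaffine_lambda} to yield the identity $\blambda^{k+1} = -\nabla g(\bv^{k+1})$. (This holds for every iterate once $k \ge 1$, and can be arranged for $k=0$ by choosing $\blambda^0 = -\nabla g(\bv^0)$.) Using the same identity at the previous iterate together with the Lipschitz continuity of $\nabla g$ with constant $L_g$ from \eqref{eq:g_sig_L_bounds} then gives
$$\|\blambda^{k+1}-\blambda^k\| = \|\nabla g(\bv^{k+1})-\nabla g(\bv^k)\| \le L_g\,\|\bv^{k+1}-\bv^k\|.$$

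Finally I would combine the pieces. Since $S=(1+\rho)I+\mu\Gamma$ has smallest eigenvalue $\lambda_{\min}(S)=1+\rho+\mu\min_p\gamma_p = 2\sigma_g+\rho$, the net effect of the $\bv$ and $\blambda$ updates is
$$\mathcal{L}_\rho(\bx^{k+1},\bv^k,\blambda^k) - \mathcal{L}_\rho(\bx^{k+1},\bv^{k+1},\blambda^{k+1}) = \Delta^k_\bv\mathcal{L} - \Delta^k_{\blambda}\mathcal{L} \ge \left(\sigma_g + \frac{\rho}{2} - \frac{L_g^2}{\rho}\right)\|\bv^{k+1}-\bv^k\|^2.$$
The coefficient $\sigma_g + \rho/2 - L_g^2/\rho$ is a constant independent of $k$ and tends to $+\infty$ as $\rho\to\infty$, hence is nonnegative for all sufficiently large $\rho$; this establishes the first displayed inequality of the statement. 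Adding back the nonnegative decreases from the $\bu$, $\bc$, $\bw$ steps gives $\mathcal{L}_\rho^k \ge \mathcal{L}_\rho^{k+1}$, i.e. the claimed monotonicity. Moreover, for any prescribed (small) $\delta>0$ one can pick $\rho$ large enough that $\sigma_g + \rho/2 - L_g^2/\rho \ge \delta$, and since the $\bu,\bc,\bw$ contributions are nonnegative the bound \eqref{eq:aug_lagr_decr_low_bound} follows. The only genuinely delicate point is the dual-by-primal estimate of the second paragraph; everything else is bookkeeping of signs together with the strong convexity and smoothness constants recorded in \eqref{eq:g_sig_L_bounds}.
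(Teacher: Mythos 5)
Your proof is correct and follows essentially the same route as the paper's: exact block minimization for $\bu,\bc,\bw$, the identity $\blambda^{k+1}=-\nabla g(\bv^{k+1})$ from the $\bv$-optimality condition combined with the multiplier update, the Lipschitz bound $\|\blambda^{k+1}-\blambda^k\|\le L_g\|\bv^{k+1}-\bv^k\|$, and Lemma~\ref{lem:auglagr_variations} to net the $\bv$-decrease against the $\blambda$-increase. The only differences are cosmetic: you keep the tighter constants (retaining the $1/\rho$ factor and the full eigenvalue bound $\sigma_g+\rho/2$, where the paper loosens to $\tfrac{1+\rho+\mu\varepsilon}{4}$ and drops $1/\rho$ to get an explicit threshold in $\mu,\varepsilon$), and you explicitly handle the $k=0$ case of $\blambda^k=-\nabla g(\bv^k)$, which the paper leaves implicit.
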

\begin{proof}
    To prove the positiveness of the augmented Lagrangian variation, we first show that the following upper bound holds:
    \begin{equation}\label{eq:bound_lagr_with_v}
        \left\Vert\blambda^{k+1}-\blambda^{k}\right\Vert^2 \le \left(1+\frac{\mu}{\varepsilon}\right)^2\,\left\|\bv^{k+1}-\bv^{k}\right\|^2.
    \end{equation}
    To prove it, one first has to observe that the optimality conditions for $\bv^{k+1}$ imply that
    \begin{equation}\label{eq:gradv_eq_lambda}
        0 = \nabla_{\bv}\mathcal{L}_{\rho}(\bx^{k+1},\bv,\blambda^k)\Big\vert_{\bv=\bv^{k+1}}= \nabla g(\bv^{k+1}) + \blambda^k + \rho \left( \mathcal{M}(\bx^{k+1}) + \bv^{k+1} \right) = \nabla g(\bv^{k+1}) + \blambda^{k+1},    
    \end{equation}
    i.e., $\blambda^{k}=- \nabla g(\bv^{k})$ for all $k>0$.
    Hence, one can write
    $$ \left\Vert\blambda^{k+1}-\blambda^{k}\right\Vert^2 = \left\Vert \nabla g(\bv^{k}) - \nabla g(\bv^{k+1})\right\Vert^2 \le L_g^2 \left\|\bv^{k+1}-\bv^{k}\right\|^2 = \left(1+\frac{\mu}{\varepsilon}\right)^2 \left\|\bv^{k+1}-\bv^{k}\right\|^2,$$
    i.e., \eqref{eq:bound_lagr_with_v} is proved.
    One can therefore exploit \eqref{eq:g_sig_L_bounds}, \eqref{eq:bound_lagr_with_v}, and Lemma~\eqref{lem:auglagr_variations} to derive
    \begin{eqnarray*}
        \mathcal{L}_{\rho}(\bx^{k+1},\bv^k,\blambda^k) &-& \mathcal{L}_{\rho}(\bx^{k+1},\bv^{k+1},\blambda^{k+1}) = \Delta^k_\bv\mathcal{L}-\Delta^k_{\blambda}\mathcal{L}=\\
        &=& \frac{1}{2}\left\Vert\bv^{k+1}-\bv^{k}\right\Vert^2_S - \frac{1}{\rho}\left\Vert\blambda^{k+1}-\blambda^{k}\right\Vert^2 \ge\\
        &\ge&\frac{1+\rho+\mu\varepsilon}{4}\left\Vert\bv^{k+1}-\bv^{k}\right\Vert^2 - \left(1+\frac{\mu}{\varepsilon}\right)^2\,\left\|\bv^{k+1}-\bv^{k}\right\|^2=\\
        &=& \left[\frac{1+\rho+\mu\varepsilon}{4}-\left(1+\frac{\mu}{\varepsilon}\right)^2\right]\,\left\|\bv^{k+1}-\bv^{k}\right\|^2.
    \end{eqnarray*}
    This implies that the augmented Lagrangian has a strict decrease at each iteration provided that
    $$\frac{1+\rho+\mu\varepsilon}{4}-\left(1+\frac{\mu}{\varepsilon}\right)^2>0, $$
    or, equivalently,
    \begin{equation}\label{eq:rho_lower_bound}
        \rho>4\left(1+\frac{\mu}{\varepsilon}\right)^2-1-\mu\varepsilon.
    \end{equation}
By observing that the updates \eqref{eq:admm_method_multiaffine_U}--\eqref{eq:admm_method_multiaffine_W} cannot increase the augmented Lagrangian, we can conclude that the sequence $\left\{\mathcal{L}_{\rho}^{k}\right\}_k$ is monotonically decreasing if $\rho$ satisfies \eqref{eq:rho_lower_bound}.

To prove \eqref{eq:aug_lagr_decr_low_bound} it is sufficient to observe that
$$ \mathcal{L}_{\rho}^k - \mathcal{L}_{\rho}^{k+1}\geq \Delta^k_\bv\mathcal{L}-\Delta^k_{\blambda}\mathcal{L}\geq \left[\frac{1+\rho+\mu\varepsilon}{4}-\left(1+\frac{\mu}{\varepsilon}\right)^2\right]\,\left\|\bv^{k+1}-\bv^{k}\right\|^2\geq\delta\left\|\bv^{k+1}-\bv^{k}\right\|^2 $$
provided that
$$\rho > 4\left[\left(1+\frac{\mu}{\varepsilon}\right)^2+\delta\right]-1-\mu\varepsilon.$$

\end{proof}

\begin{lemma}\label{lem:lagr_lower_bounded}
   There exists $\rho$ sufficiently large such that the sequence $\left\{\mathcal{L}_{\rho}^{k}\right\}_k$ is bounded below, and thus with Lemma~\ref{lem:decreasing_aug_lagr}, the sequence $\left\{\mathcal{L}_{\rho}^{k}\right\}_k$ is convergent.
\end{lemma}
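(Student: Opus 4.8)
The plan is to establish a finite lower bound on $\mathcal{L}_\rho^k$ uniform in $k$, and then invoke Lemma~\ref{lem:decreasing_aug_lagr}: a sequence that is monotonically decreasing and bounded below necessarily converges. Thus the whole statement reduces to producing such a lower bound for $\mathcal{L}_\rho(\bx^k,\bv^k,\blambda^k)$. The delicate point is the indefinite linear term $\langle\blambda^k,\mathcal{M}(\bx^k)+\bv^k\rangle$, since a priori nothing prevents $\blambda^k$ from growing; the key idea is to eliminate the multiplier by using the optimality condition for the $\bv$-subproblem already derived in the proof of Lemma~\ref{lem:decreasing_aug_lagr}.

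Concretely, I would first recall from \eqref{eq:gradv_eq_lambda} that $\blambda^k=-\nabla g(\bv^k)$ for every $k>0$. Writing $r^k:=\mathcal{M}(\bx^k)+\bv^k$ for the primal residual and using $\chi_\Delta(\bu^k)\geq 0$, the augmented Lagrangian satisfies
$$\mathcal{L}_\rho^k \;\geq\; g(\bv^k) - \langle \nabla g(\bv^k),\,r^k\rangle + \frac{\rho}{2}\|r^k\|^2.$$
At this point the plan is to absorb the cross term into $g$ via the descent lemma: since $g$ has $L_g$-Lipschitz gradient, the bound $g(\bv^k-r^k)\leq g(\bv^k)-\langle\nabla g(\bv^k),r^k\rangle+\frac{L_g}{2}\|r^k\|^2$ rearranges to
$$g(\bv^k) - \langle \nabla g(\bv^k),\,r^k\rangle \;\geq\; g(\bv^k-r^k) - \frac{L_g}{2}\|r^k\|^2.$$
Substituting gives $\mathcal{L}_\rho^k \geq g(\bv^k-r^k) + \frac{\rho-L_g}{2}\|r^k\|^2$. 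Because $g$ is a strongly convex quadratic it attains a finite minimum $g^\star:=\min_\bv g(\bv)>-\infty$, so as soon as $\rho\geq L_g$ the quadratic term is nonnegative and $\mathcal{L}_\rho^k\geq g^\star$ for all $k$, uniformly.

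It remains to reconcile the threshold $\rho\geq L_g$ with the requirement of Lemma~\ref{lem:decreasing_aug_lagr}. Recalling $L_g\leq 1+\mu/\varepsilon$ from \eqref{eq:g_sig_L_bounds}, a short computation shows that the bound \eqref{eq:rho_lower_bound} forcing monotone decrease already exceeds $1+\mu/\varepsilon$ for every admissible $\mu>0$ and $\varepsilon\in(0,1)$; hence any $\rho$ large enough to make $\{\mathcal{L}_\rho^k\}_k$ decreasing automatically makes it bounded below, and the two hypotheses can be satisfied simultaneously. The main obstacle is therefore not the final bookkeeping but the sign-control of the multiplier term: instead of estimating $\blambda^k$ directly, one trades it for $\nabla g(\bv^k)$ and lets the smoothness of $g$ convert the potentially destabilising inner product into a controllable quadratic shift of $g$. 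Combining this uniform lower bound with the monotone decrease from Lemma~\ref{lem:decreasing_aug_lagr}, the monotone convergence theorem yields convergence of $\{\mathcal{L}_\rho^k\}_k$.
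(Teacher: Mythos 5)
Your proof is correct and follows essentially the same route as the paper's: both eliminate the multiplier via the optimality condition $\blambda^k=-\nabla g(\bv^k)$ and then control the cross term through the Lipschitz smoothness of $g$, your shifted point $\bv^k-r^k=-\mathcal{M}(\bx^k)$ being exactly the paper's auxiliary point $\bz^{k+1}$, at which $g$ is bounded below (by $g^\star$ in your version, by $0$ in the paper's, since $g\ge 0$). The only cosmetic difference is that you invoke the descent lemma where the paper writes out the exact quadratic expansion of $g$, and you additionally verify that the threshold \eqref{eq:rho_lower_bound} already dominates $L_g$, which the paper handles by taking a maximum.
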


\begin{proof}
    To prove this result, given $k>0$, consider the vector 
    $$\bz^{k+1} = \argmin \left\{g(\bz)\;\middle|\; \mathcal{M}(\bx^{k+1})+\bz=0 \right\},$$
    which satisfies
    $$ \left\Vert \bz^{k+1} - \bv^{k+1} \right\Vert = \left\Vert \mathcal{M}(\bx^{k+1}) + \bv^{k+1} \right\Vert.$$
    Subtracting $\left\langle\blambda^{k+1},\mathcal{M}(\bx^{k+1}) + \bz^{k+1}\right\rangle=0$ from $\mathcal{L}_{\rho}^{k+1}$, one has
\begin{equation}\label{eq:lam_bound_lagr_1}
    \mathcal{L}_{\rho}^{k+1} = \chi_\Delta(\bu^{k+1}) + g(\bv^{k+1}) + \left\langle\blambda^{k+1},\bv^{k+1}-\bz^{k+1}\right\rangle + \frac{\rho}{2}\left\|\mathcal{M}(\bx^{k+1}) + \bv^{k+1}\right\|^2.
\end{equation}
    We recall that the optimality conditions for $\bv^{k+1}$ imply $\blambda^{k+1} = - \nabla g(\bv^{k+1})$. By substituting this into \eqref{eq:lam_bound_lagr_1} and adding and subtracting $g(\bz^{k+1})$, one obtains

\begin{multline}\label{eq:lam_bound_lagr_2}
        \mathcal{L}_{\rho}^{k+1} = \chi_\Delta(\bu^{k+1}) + g(\bz^{k+1}) + \frac{\rho}{2}\left\|\mathcal{M}(\bx^{k+1}) + \bv^{k+1}\right\|^2 +\\
        - \left( g(\bz^{k+1}) - g(\bv^{k+1}) - \left\langle\nabla g(\bv^{k+1}),\bz^{k+1}-\bv^{k+1}\right\rangle \right).
\end{multline}
By observing that
\begin{multline*}
    g(\bz^{k+1}) - g(\bv^{k+1}) - \left\langle\nabla g(\bv^{k+1}),\bz^{k+1}-\bv^{k+1}\right\rangle = \\
    = \frac{1}{2}\left\Vert \bz^{k+1}-\bv^{k+1} \right\Vert^2_{I+\mu\Gamma}\leq  \frac{1+\frac{\mu}{\varepsilon}}{2}\left\Vert \bz^{k+1}-\bv^{k+1} \right\Vert^2 = \frac{1+\frac{\mu}{\varepsilon}}{2}\left\Vert \mathcal{M}(\bx^{k+1}) + \bv^{k+1} \right\Vert^2,
\end{multline*}
one has
\begin{equation*}
        \mathcal{L}_{\rho}^{k+1} \ge \chi_\Delta(\bu^{k+1}) + g(\bz^{k+1}) + \frac{1}{2} \left(\rho-1-\frac{\mu}{\varepsilon} \right)\left\|\mathcal{M}(\bx^{k+1}) + \bv^{k+1}\right\|^2>0, 
\end{equation*}
for $\rho>1+\frac{\mu}{\varepsilon}$. By Lemma~\ref{lem:decreasing_aug_lagr}, if
$$ \rho>\max\left\{1+\frac{\mu}{\varepsilon},\;4\left(1+\frac{\mu}{\varepsilon}\right)^2-1-\mu\varepsilon\right\} $$
the sequence $\left\{\mathcal{L}_{\rho}^{k}\right\}_k$ is monotonically decreasing and lower bounded, thus it converges.
\end{proof}

We are now ready to state the main result of this section.

\begin{theorem}\label{thm:convergence}
    Assume the sequence $\left\{(\bx^k,\bv^k,\blambda^k)\right\}_k$ produced by \eqref{eq:admm_method_multiaffine_U}--\eqref{eq:admm_method_multiaffine_lambda} is bounded, hence it has limit points. Every limit point $(\bx^*,\bv^*,\blambda^*)$ satisfies the constraint $\mathcal{M}(\bx^*) + \bv^* =0$. Furthermore, $\bv^*$ is a stationary point for the problem
    \begin{equation}\label{eq:maintheorem_v_problem}
        \begin{array}{rcl}
             \min\limits_{\bv}  &\;& \displaystyle g(\bv) = \frac{1}{2}\|\bv\|^2 + \frac{\mu}{2}\left\Vert \bv-\bar{\bv}\right\Vert^2_\Gamma\\
             \mathrm{s.t.} & & \displaystyle \mathcal{M}(\bx^*) + \bv = 0.
        \end{array}
    \end{equation}
    
\end{theorem}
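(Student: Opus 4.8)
The plan is to reduce the theorem to an assembly of the three preceding lemmas, the essential mechanism being that the augmented Lagrangian is monotone, bounded, and controls the successive-iterate gaps. First I would note that, for $\rho$ large enough, Lemma~\ref{lem:decreasing_aug_lagr} makes $\{\mathcal{L}_\rho^k\}_k$ monotonically decreasing and Lemma~\ref{lem:lagr_lower_bounded} makes it bounded below; hence it converges and $\mathcal{L}_\rho^k-\mathcal{L}_\rho^{k+1}\to 0$. Plugging this into the lower bound \eqref{eq:aug_lagr_decr_low_bound}, namely $\mathcal{L}_\rho^k-\mathcal{L}_\rho^{k+1}\ge\delta\|\bv^{k+1}-\bv^k\|^2$ with $\delta>0$, I immediately obtain $\|\bv^{k+1}-\bv^k\|\to 0$.

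Next I would pass this to the multipliers. The Lipschitz estimate \eqref{eq:bound_lagr_with_v}, equivalently $\|\blambda^{k+1}-\blambda^k\|\le L_g\|\bv^{k+1}-\bv^k\|$, gives $\|\blambda^{k+1}-\blambda^k\|\to 0$. With this hypothesis in force, the closing statement of Lemma~\ref{lem:auglagr_variations} applies directly: $\nabla_\blambda\mathcal{L}_\rho(\bx^k,\bv^k,\blambda^k)=\mathcal{M}(\bx^k)+\bv^k\to 0$, and, $\mathcal{M}$ being continuous, every limit point satisfies $\mathcal{M}(\bx^*)+\bv^*=0$. This settles the first assertion. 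Concretely, I would rewrite the residual through the update \eqref{eq:admm_method_multiaffine_lambda} as $\mathcal{M}(\bx^{k+1})+\bv^{k+1}=\tfrac{1}{\rho}(\blambda^{k+1}-\blambda^k)$ before taking limits along the subsequence defining the limit point.

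For the second assertion I would spell out what stationarity means for \eqref{eq:maintheorem_v_problem}: with $\bx^*$ fixed, its feasible set is the affine set $\{\bv:\mathcal{M}(\bx^*)+\bv=0\}$, whose constraint Jacobian in $\bv$ is the identity, so a point $\bv^*$ is stationary (KKT) exactly when it is feasible and there is a multiplier $\blambda^*$ with $\nabla g(\bv^*)+\blambda^*=0$. Feasibility is the first assertion. The gradient condition follows from the optimality relation \eqref{eq:gradv_eq_lambda}, which yields $\blambda^k=-\nabla g(\bv^k)$ for every $k$; passing to the limit along the subsequence defining $(\bx^*,\bv^*,\blambda^*)$ and using continuity of $\nabla g$ gives $\blambda^*=-\nabla g(\bv^*)$, i.e. $\nabla g(\bv^*)+\blambda^*=0$. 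Together the two conditions certify that $\bv^*$ is stationary for \eqref{eq:maintheorem_v_problem}.

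I do not expect a genuine obstacle here, since the analytically heavy work — bounding the multiplier increment by the primal increment, which rests on the strong convexity and smoothness of $g$ encoded in \eqref{eq:g_sig_L_bounds} — is already discharged in the lemmas. The only care required is bookkeeping: taking every limit along the one subsequence that defines $(\bx^*,\bv^*,\blambda^*)$, and invoking continuity of both the multiaffine map $\mathcal{M}$ and the (affine) gradient $\nabla g$ when passing to the limit. The conceptually pivotal step is the reading of stationarity for \eqref{eq:maintheorem_v_problem} as the pair ``feasibility plus $\blambda^*=-\nabla g(\bv^*)$,'' both of which the scheme delivers in the limit.
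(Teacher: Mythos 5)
Your proof is correct and follows essentially the same route as the paper's: Lemmas~\ref{lem:decreasing_aug_lagr} and~\ref{lem:lagr_lower_bounded} give $\left\Vert\bv^{k+1}-\bv^{k}\right\Vert\to 0$, the bound \eqref{eq:bound_lagr_with_v} transfers this to the multipliers, Lemma~\ref{lem:auglagr_variations} yields feasibility of every limit point, and the relation $\blambda^{k}=-\nabla g(\bv^{k})$ from \eqref{eq:gradv_eq_lambda} passes to the limit to give stationarity. Your two small deviations---deducing $\mathcal{L}_{\rho}^{k}-\mathcal{L}_{\rho}^{k+1}\to 0$ from convergence of the sequence rather than summing the telescoping series, and invoking \eqref{eq:gradv_eq_lambda} directly instead of first showing $\nabla_{\bv}\mathcal{L}_{\rho}\to 0$ and then cancelling the penalty term by feasibility---are harmless shortcuts within the same argument.
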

\begin{proof}
    From Lemma~\ref{lem:decreasing_aug_lagr}, for sufficiently large $\rho$, we have that
    $$ \mathcal{L}_{\rho}^k - \mathcal{L}_{\rho}^{k+1}\geq\delta\left\|\bv^{k+1}-\bv^{k}\right\|^2 $$
    for a given $\delta>0$. Summing over $k$, and by Lemma~\ref{lem:lagr_lower_bounded} we have that
    $$ \mathcal{L}_{\rho}^0 \geq \mathcal{L}_{\rho}^0 - \lim_k\mathcal{L}_{\rho}^{k} \geq \delta \sum_{k=1}^{+\infty}\left\|\bv^{k+1}-\bv^{k}\right\|^2. $$
    Hence $\left\|\bv^{k+1}-\bv^{k}\right\|\rightarrow 0$, which, combined with \eqref{eq:bound_lagr_with_v}, yields $\left\Vert\blambda^{k+1}-\blambda^{k}\right\Vert\rightarrow 0$. The latter, by Lemma~\ref{lem:auglagr_variations}, implies that $\nabla_{\blambda} \mathcal{L}_{\rho}(\bx^{k},\bv^k,\blambda^{k})\rightarrow 0$ and that every limit point $(\bx^*,\bv^*)$ of $\left\{(\bx^k,\bv^k)\right\}_{k\in\bN}$ satisfies $\mathcal{M}(\bx^*) + \bv^* = 0$.

    By checking the gradient of the Lagrangian at the end of each iteration, one can verify that
    $$ \nabla_\bv \mathcal{L}_\rho(\bx^{k+1},\bv^{k+1},\blambda^{k+1}) = \nabla g(\bv^{k+1}) + \blambda^{k+1} + \rho\left(\mathcal{M}(\bx^{k+1}) + \bv^{k+1}\right)\rightarrow 0$$ 
    by \eqref{eq:gradv_eq_lambda} and by continuity of the multiaffine constraints.
    By continuity, one has that
    $$ \nabla_\bv \mathcal{L}_\rho(\bx^*,\bv^*,\blambda^*) = \nabla g(\bv^*) + \blambda^* + \rho\left(\mathcal{M}(\bx^*) + \bv^*\right) = 0,$$ 
    i.e., considering that $(\bx^*,\bv^*)$ is feasible for \eqref{eq:MICCT_multiaffine},
    $$ -\nabla g(\bv^*) = \blambda^* + \rho\left(\mathcal{M}(\bx^*) + \bv^*\right) = \blambda^*.$$ 
    Hence, $\bv^*$ satisfies the first order optimality conditions for problem \eqref{eq:maintheorem_v_problem}.
\end{proof}

\begin{remark}
    In the result above, we assumed the boundedness of the sequence generated by the ADMM scheme. This assumption is sometimes replaced by a coercivity requirement for $\phi$ over the feasible set. In our specific case, $\phi$ is coercive with respect to $\bu$ and $\bv$, but it is completely independent from $\bc$ and $\bw$. One could, of course, add some terms to $\phi$ penalising the norm of the two vectors to ensure the coercivity. However, we prefer to add the boundedness assumption and keep the model as simple as possible. The experiments we performed suggested that the norm of all the primal unknowns is non-increasing along the iterations, thus justifying the additional assumption.
\end{remark}

\subsection{Empirical convergence behaviour}

In order to analyse the convergence behaviour of the proposed ADMM scheme we analyze the history of the objective function (as done in the original paper \cite{bib:ADMMmultiaffine}) and the change in the reconstructed MRI scan, i.e.
$$ \left\Vert G^\top\bw^k \,(\bu^k)^\top \bc^k - G^\top\bw^{k-1} \,(\bu^{k-1})^\top \bc^{k-1} \right\Vert.$$

The plots in Figure~\ref{fig:convergence} below show these two metrics for 8 different instances of the problem with different levels of noise intensity and bias field.

\begin{figure}[htbp]
\centering
    \includegraphics[width=0.4\columnwidth]{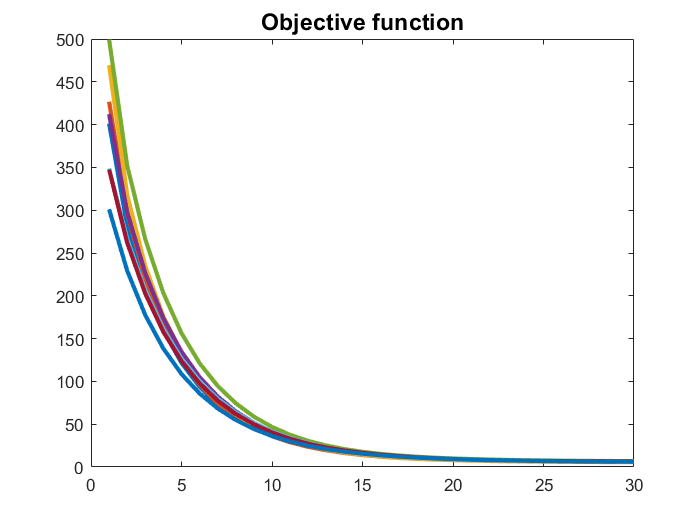}
    \includegraphics[width=0.4\columnwidth]{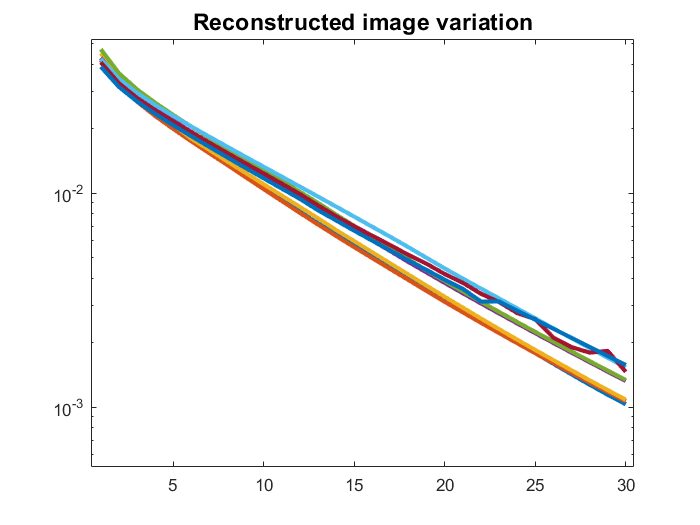}

 \caption{History of the objective function value and the change in the reconstructed MRI scan for 8 different instances of the multiaffine-constrained problem.\label{fig:convergence}}
\end{figure}

As one can see, the objective function decreases in all the cases and the reconstructed image variation falls below $2\cdot10^{-3}$ after around 30 ADMM iterations for all the instances.

\section{Numerical experiments}\label{sec:test}
All experiments are performed on the publicly available da\-ta\-set BrainWeb: Simulated Brain Database (SBD) \cite{bib:MRIsimulator}. The SBD simulator can generate MRI data of regular brain models, producing stacks of $181$ slices of $181\times127$ pixel sizes and the corresponding ground truth for cerebral tissue segmentation. Noise and bias fields can be added by the simulator to the slice to provide realistic distortions in MRI data. Our experiments ran on all stacks of T1-weighted MRI slices using several percentages of noise (\texttt{np}) and bias levels (\texttt{bl}) to analyze the reliability of the proposed algorithm in segmenting the three ROIs under investigation. For the sake of brevity, the main results performed on four slices are discussed. The selected brain MRI slices were the number 60, 64, 91, and 100\footnote{We named the four selected \textsc{BrainWeb} MRIs as \texttt{sliceXXX} where \texttt{XXX} is the number of the slice with three digits} shown in Table \ref{tab:ROIpixels}. Each slice appears with the corresponding ground truth and a list of the sizes of each single tissue expressed in the number of pixels and their percentage over the total.
\begin{table}[ht!]
    \centering \ra{1}  
    \begin{tabular}{@{}lrrrr@{}} 
  & \footnotesize \texttt{slice060} & 
  \footnotesize \texttt{slice064} & 
  \footnotesize \texttt{slice091} & 
  \footnotesize \texttt{slice100} \\
  \footnotesize  \textsc{BrainWeb} MRIs & \includegraphics[width=.12\columnwidth]{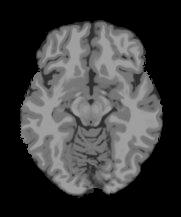} &
\includegraphics[width=.12\columnwidth]{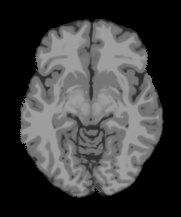} &
\includegraphics[width=.12\columnwidth]{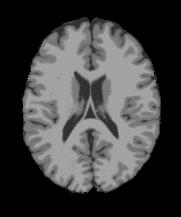} &
\includegraphics[width=.12\columnwidth]{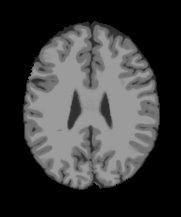}\\
\midrule
\footnotesize  \texttt{Ground Truths} & \includegraphics[width=.12\columnwidth]{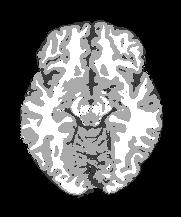} &
\includegraphics[width=.12\columnwidth]{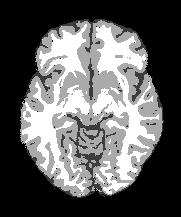} &
\includegraphics[width=.12\columnwidth]{gt_brainweb_091.jpg} &
\includegraphics[width=.12\columnwidth]{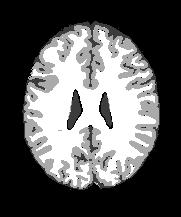}\\
 \midrule
 \footnotesize{ROI} & \multicolumn{4}{c}{\footnotesize \texttt{Number of Pixels and Percentage of the Total}} \\ 
 \midrule
 \textcolor{csf}{CSF} & $2424\;(12.6\%)$ & $2288\;(11.6\%)$ & $2849\;(14.8\%)$ & $2175\;\;\;\,(12\%)$ \\
 \textcolor{gm}{GM} & $10702\;(55.5\%)$ & $10133\;(51.2\%)$ & $6942\;(36.2\%)$ & $6572\;(36.3\%)$ \\
 \textcolor{wm}{WM} & $6152\;(31.9\%)$ & $7388\;(37.3\%)$ & $9401\;\;\;\,(49\%)$ & $9341\;(51.6\%)$ \\
\bottomrule
\end{tabular}\vspace{1mm}
 \caption{Selected MRI slices of $181 \times 127$ pixels from \textsc{BrainWeb} enhance different sizes of the three tissues,  cerebrospinal fluid (CSF), grey matter (GM), and white matter (WM), measured in number of pixels and percentage of the total.}\label{tab:ROIpixels}
\end{table}
\subsection{The SegMIC2T algorithm for MRI segmentation}
The algorithm SegMIC2T exploits the ADMM method introduced in the previous sections to solve our model \eqref{eq:MIC2T}, which acts as a correction step of the noisy and biased MRI slice and combines with it a pre- and a post-processing step, suitably. A scheme of the resulting algorithm is depicted in Algorithm~\ref{alg:SegMIC2T}. We note that any strategy to estimate noise location can be used in the initial step, and any segmentation method can be used in the last step.

\begin{algorithm}
\caption{\textsf{SegMIC2T}}
\label{alg:SegMIC2T} 
\begin{algorithmic}[1]
\State \textsf{\textbf{Noise estimation step}} 
\Statex Build a decomposition \eqref{eq:ctd} of the initial corrupted image $I$
\State \textsf{\textbf{Correction step}}
\Statex Run the ADMM scheme \eqref{eq:admm_method_mod_Usubp}--\eqref{eq:admm_method_mod_Lupdate} to determine the denoised and debiased image $J_p=(\bc^\top \bu_p)_p$
\State \textsf{\textbf{Segmentation step}}
\Statex Run the K-means clustering method to segment $J$ into the three ROIs
\end{algorithmic}
\end{algorithm}

\begin{figure}[ht!]
\centering
\begin{tabular}{ccc|c|c}
\toprule
  & \multicolumn{4}{c}{\textsf{SegMIC2T}}\\
 \cmidrule{2-5}
\footnotesize \texttt{slice060}  & \multicolumn{2}{|c|}{\scriptsize \textsf{\textbf{Noise estimation step}}} &   \multicolumn{1}{c|}{\scriptsize \textsf{\textbf{Correction step}}} & \scriptsize \textsf{\textbf{Segmentation step}}\\ 
{\scriptsize \texttt{np:} $5\%$, \texttt{bl:} $0$} & \multicolumn{1}{|c}{ \scriptsize $\bar{I}$} &  \scriptsize $\bar{\bv}$ & \footnotesize $J = (\bc^\top\bu_p)_p$ & \\
\includegraphics[width=.13\columnwidth]{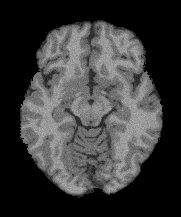} &
\multicolumn{1}{|c}{\includegraphics[width=.13\columnwidth]{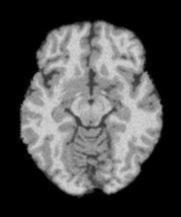}} &
\includegraphics[width=.13\columnwidth]{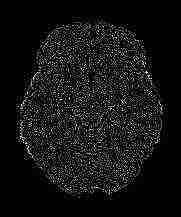} &
\includegraphics[width=.13\columnwidth]{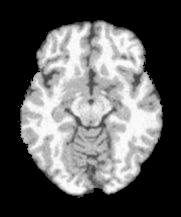} &
\includegraphics[width=.13\columnwidth]{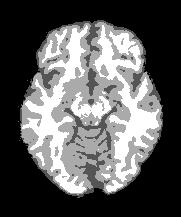}\\
 {\scriptsize \texttt{np:} $9\%$, \texttt{bl:} $40$} & \multicolumn{1}{|c}{ }& & & \\
 \includegraphics[width=.13\columnwidth]{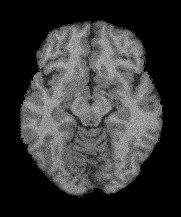} &
\multicolumn{1}{|c}{\includegraphics[width=.13\columnwidth]{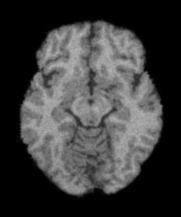}} &
\includegraphics[width=.13\columnwidth]{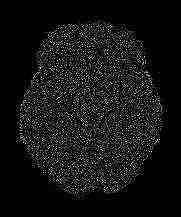} &
\includegraphics[width=.13\columnwidth]{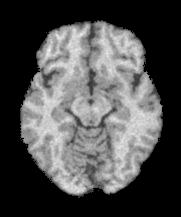} &
\includegraphics[width=.13\columnwidth]{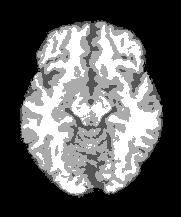}\\
\bottomrule
\end{tabular}
\caption{Workflow of the  \textsf{SegMIC2T} algorithm performed on \texttt{slice060} with low (\textit{top row}) and moderate (\textit{bottom row}) level of the combined artifacts. We indicate the noise percentage with \texttt{np} and with \texttt{bl} the bias level, respectively.
}\label{fig:SegMIC2T}
\end{figure}

We developed a \textsc{Matlab} implementation of SegMIC2T, performing image operations by built-in functions of the Image Processing toolbox. The decomposition of the input image was carried out by a non-linear low-pass and high-pass filter pair \cite{bib:ctd}, while the segmentation was realized by the K-means algorithm (setting the number of clusters to 3). {We note that the CT decomposition produced by \eqref{eq:ctd} is not unique, but a coarse decomposition is sufficient to initialize our model, since variations in the decomposition parameters have limited impact on the final segmentation accuracy \cite{bib:ctetris}}. The input parameters of the ADMM algorithm were set as follows (see \cite{bib:ctetris} for further details): $N=3$ is the number of tissues to be segmented,  the three constants $c_i, \, i=1, \ldots, N$ were initialized to $0.33$, $0.66$, and $0.99$ since the pixel intensities are normalized in $[0,1]$,  then the functions $\bu_p\in\Re^3$  were set as
$$
\bu_p=\left\lbrace \begin{array}{ll}
(1,0,0), & \mbox{ if } \bar{I}_p\in[0,0.33], \\
(0,1,0), & \mbox{ if } \bar{I}_p\in(0.33,0.66], \\
(0,0,1), & \mbox{ if } \bar{I}_p\in(0.66,1]. \\
\end{array} \right.
$$

\noindent Moreover, $M=10$ is the dimension of the third-order Legendre polynomial base $\{\bg^1, \ldots, \bg^M\}$ where $\bg^j \in\Re^P$ and $P$ is the total number of image pixels. The initialization of the $M$-vector $\mathbf{w}$ satisfies $\mathbf{w}^\top G_p=1$ for each pixel $p$. Finally, $\mathbf{v}$ starts as a vector with all $0$. The maximum number of ADMM iterations was empirically set to $30$, while the weighting KL term parameter was set as $\mu=10^{-2}$, and the correction parameter $\varepsilon=10^{-13}$, for all the test problems, and the augmented Lagrange penalty parameter was set to $\rho=10$. 

Figure~\ref{fig:SegMIC2T} illustrates the workflow of the algorithm SegMIC2T using the \texttt{slice060} with low (\textit{top row}) and moderate (\textit{bottom row}) levels of the combined artifacts.
The noise estimation step splits the original MRI slice into a cartoon image (\textit{second column}) and a texture image (\textit{third column}) \eqref{eq:ctd}. The correction step, coinciding with the application of the proposed multiaffine ADMM method, produces a denoised and debiased version of the original slice, whose $p-$th pixel corresponds to $\bc^\top \bu_p$ (\textit{fourth column}). In the last step, the segmentation of the original slice is produced by clustering the pixels in the corrected image (\textit{fifth column}). 

{We compared SegMIC2T with the MICO implementation available from \cite{MICOcode} and with TVMICO, which is an extension of the original MICO model proposed by the same authors that incorporates a Total Variation (TV) regularization term  \cite{bib:TV} in the objective function to foster the membership vector ($\bu$) to be piecewise constant. It is worth noting that in \cite{bib:Wali2023}, TVMICO was shown to outperform state-of-the-art software for MRI segmentation such as FSL\footnote{\url{https://fsl.fmrib.ox.ac.uk/fsl/fslwiki/}}, SPM\footnote{\url{http://www.fil.ion.ucl.ac.uk/spm/software/}},
and FANTASM\footnote{\url{http://mipav.cit.nih.gov/}} on similar tasks. This indicates that TVMICO is a good benchmark for the performance of SegMIC2T.
Since no code was available online, starting from the original MICO code, we built our implementation of the ADMM method for the minimization of the TVMICO model presented in~\cite{bib:Wali2023}. In terms of a computational point of view, the main differences between MICO and TVMICO are the projection of the points on the 3D standard simplex and the solution of three potentially large linear systems involving the first-order finite-difference operators characterizing the TV regularization term. In our implementation, the projection is performed, as suggested by the authors, by the algorithm described in \cite{chen2011projection}. As regards the linear systems, since the boundaries of MRI slices are usually black background pixels, periodic boundary conditions can be used for the MRI slices and the difference operators, resulting in the difference operators being Block Circulant with Circulant Blocks (BCCB). Therefore, at each step of TVMICO, the linear systems can be solved efficiently by diagonalizing the BCCB matrices using two-dimensional Discrete Fourier Transforms (DFTs). In terms of parameters' choice, we fixed the ADMM penalty parameter to $\gamma = 5$ and the TV regularization parameter to $\lambda=10^{-3}$ for all the settings (we refer the reader to \cite{bib:Wali2023} for an explanation of the role of the two parameters). It is worth mentioning that choosing the same parameters' setting for all the experiments is not limiting for a twofold reason: first, also in \cite{bib:Wali2023} the authors fix a single parameter for all the experiments; second, this makes the comparison with SegMIC2T fairer, since also in our case there is a fixed parameters' choice for all the experiments disregarding of the noise and bias level. Since the two ADMM methods (the one introduced here for SegMIC2T and the one for TVMICO) have similar per-iteration costs, we stopped the execution after 30 ADMM iterations for TVMICO.}

\subsection{Test description}
Our first test is aimed at showing the robustness of the proposed algorithm with respect to both noise and intensity bias, using three noise percentages (i.e., $5\%$, $7\%$, $9\%$), each one combined with a different bias level (i.e., $0$, $20$, $40$) on \texttt{slice091}. Figure \ref{fig:slices91} shows a comparison of the qualitative results between SegMIC2T and MICO in segmenting \texttt{slice091} corrupted by combined artifacts. The three tissues were partitioned by our method with better accuracy than MICO, without any artifacts, even with increasing noise percentages and bias levels. One can indeed observe that, for high noise levels, MICO produces inaccurate segmentations; the larger the noise, the more corruption there is in the output segmentation. 
\begin{figure}[ht!]
\centering
\begin{tabular}{cccc}
\multicolumn{4}{c}{\footnotesize \texttt{slice091}}\\ 
\midrule
\footnotesize \texttt{ground truth} &   \multicolumn{3}{c}{\footnotesize \texttt{binary masks}}  \\
\cmidrule{2-4}
\includegraphics[width=.12\columnwidth]{gt_brainweb_091.jpg} & \includegraphics[width=.12\columnwidth]{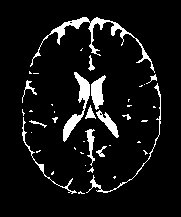} &  \includegraphics[width=.12\columnwidth]{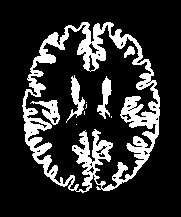} & \includegraphics[width=.12\columnwidth]{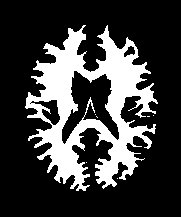} \\
& \footnotesize \texttt{\textcolor{csf}{CSF}} & \footnotesize \texttt{\textcolor{gm}{GM}} & \footnotesize \texttt{\textcolor{wm}{WM}} \\
\midrule
 & \footnotesize \texttt{np} $5\%$,  \texttt{bl} $0$ &  \footnotesize  \texttt{np} $7\%$, \footnotesize \texttt{bl} $20$ & \footnotesize \texttt{np} $9\%$, \texttt{bl} $40$\\
 &
\includegraphics[width=.12\columnwidth]{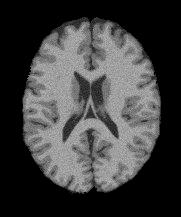} &
\includegraphics[width=.12\columnwidth]{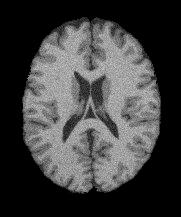} &
\includegraphics[width=.12\columnwidth]{brainweb_pn9_rf40_091.jpg}\\

& \multicolumn{3}{c}{\footnotesize \texttt{segmentations}}\\ \cmidrule{2-4}
{\footnotesize \textsf{MICO}} &\includegraphics[width=.12\columnwidth]{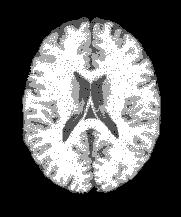} &
\includegraphics[width=.12\columnwidth]{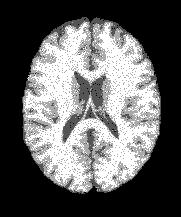} &
\includegraphics[width=.12\columnwidth]{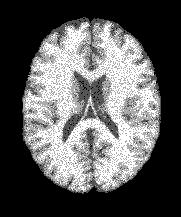}\\ \midrule
%
%

%
 \footnotesize \textsf{\textsf{SegMIC2T}} & \includegraphics[width=.12\columnwidth]{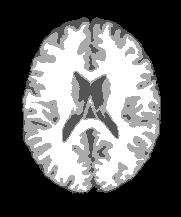} &
\includegraphics[width=.12\columnwidth]{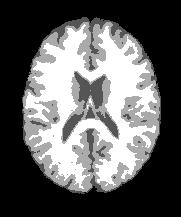} &
\includegraphics[width=.12\columnwidth]{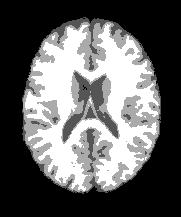} \\\midrule
\end{tabular}
\caption{The first row shows the ground-truth of the \textsc{BrainWeb} slice n. 91 and the binary masks of the three tissues, CSF, GM, and WM. The second row exhibits the corresponding MRI slice corrupted by the following three combinations of the noise percentage, \texttt{np}, and bias level, \texttt{bl}, (from left to right): $5\%$ noise and $0-$level bias, $7\%$ noise and $20-$level bias, and $9\%$ noise and $40-$level bias. The third and fourth
rows present the segmentation results produced by MICO, 
and SegMIC2T, respectively.}\label{fig:slices91}
\end{figure}
Applying the two segmentation methods to other brain slices and different noise and bias level combinations produced similar qualitative results. Usually, when the noise isn't negligible, SegMIC2T outperforms MICO in the accuracy of the qualitative results. 
To compare the performance of the two segmentation methods quantitatively, some metrics among the most adopted in the literature were evaluated on the single tissue of the brain MRI slices. All the metrics, whose definition is specified in Table~\ref{tab:segmetrics}, assume values in $[0,1]$, and higher values indicate better-performed segmentation. Each metric refers to a binary segmentation mask for the specific ROI, i.e., one per CSF, one per WM, and one per GM, respectively (see the first row of Figure \ref{fig:slices91} for an example of the three binary masks on \texttt{slice091}).
Following the standard conventions, we adopted the following notations: 
\begin{itemize}
    \item $TP$, True Positive, is the number of pixels correctly recognized as part of the ROI;
    \item $TN$, True Negative, is the number of pixels correctly identified as not part of the ROI;
    \item $FP$, False Positive, is  the number of pixels being misclassified as belonging to the ROI;
    \item $FN$, False Negative, is the number of pixels in the ROI being misclassified as part of other tissues.
\end{itemize}
From this, the binary segmentation ($S$) of the single tissue and the corresponding binary mask ($BM$) can be obtained as $S=TP+FP$ and $BM=TP+FN$, respectively.

\begin{table}[ht]
    \centering
    \begin{tabular}{lccc}
    \toprule
         \footnotesize\textbf{metrics} & \footnotesize\textbf{otherwise known as} & \footnotesize\textbf{equation} & \footnotesize\textbf{description} \\
    \midrule     \footnotesize\texttt{Jaccard} & \footnotesize Intersection over  & \footnotesize  \multirow{2}{*}{$\frac{TP}{TP+FP+FN}$} &  \footnotesize measures the similarity\\
         & \footnotesize Union score  & & \footnotesize among $BM$ and $S$\\
    \midrule
    \footnotesize\texttt{Sensitivity} & \footnotesize True Positive Rate  & \multirow{2}{*}{$\frac{TP}{TP+FN} $} & \footnotesize  measures how many $TP$ \\
     & \footnotesize Recall & & \footnotesize are detected in $S$\\
     \midrule
     \footnotesize\texttt{Specificity} & \footnotesize True Negative Rate & \footnotesize  \multirow{2}{*}{$\frac{TN}{TP+TN}$} & \footnotesize measures how many $TN$ \\
     & & & \footnotesize are detected in $S$  \\
     \midrule
     \footnotesize \texttt{Dice} & \footnotesize F measure & \footnotesize \multirow{2}{*}{$\frac{2TP} {2TP+FN+FP}$} & \footnotesize measures the overlap  \\
     & \footnotesize{F1 score} & & \footnotesize between $BM$ and $S$ \\
    \bottomrule \\
    \end{tabular} 
    \caption{Metrics adopted to evaluate the performance of the segmentation methods.}
    \label{tab:segmetrics}
\end{table}
The performance of MICO and SegMIC2T in partitioning the three ROIs of \texttt{slice091} were detailed in Table \ref{tab:metrics60}. Summarizing, SegMIC2T exhibited the highest values of all quantitative metrics, confirming its good performance even when the percentage of noise and the bias level were increased.

\begin{figure}[!ht]
\centering
\begin{tabular}{ccccc}
\multicolumn{5}{c}{\footnotesize \texttt{slice064}} \\
\midrule
{\footnotesize \texttt{np:} $5\%$, \texttt{bl:} $0$} & \multicolumn{3}{c}{\footnotesize \texttt{segmentations}} \\
\includegraphics[width=.13\columnwidth]{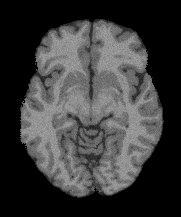} &
\includegraphics[width=.13\columnwidth]{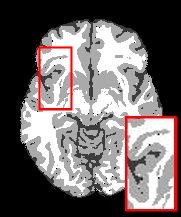} &
\includegraphics[width=.13\columnwidth]{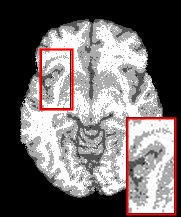} &
\includegraphics[width=.13\columnwidth]{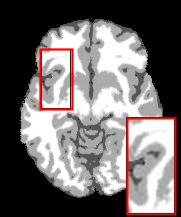}
& \includegraphics[width=.13\columnwidth]{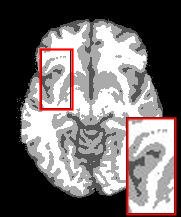}\\
\footnotesize \texttt{np:} $9\%$, \texttt{bl:} $40$ \\
\includegraphics[width=.13\columnwidth]{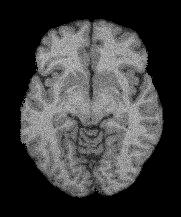} & 
\includegraphics[width=.13\columnwidth]{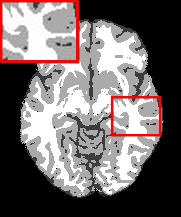} &
\includegraphics[width=.13\columnwidth]{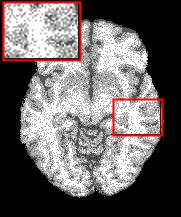} &
\includegraphics[width=.13\columnwidth]{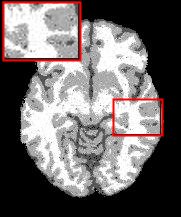} & 
{\includegraphics[width=.13\columnwidth]{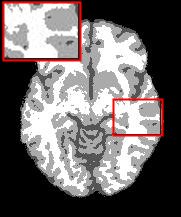}}\\\vspace*{-2mm}
 & \footnotesize \texttt{ground truth} & \footnotesize \textsf{MICO} & \footnotesize \textsf{TVMICO} & \footnotesize \textsf{SegMIC2T} \\[2mm]
\midrule 

\end{tabular} 
\caption{Comparison of \textsf{MICO}, \textsf{TVMICO} and \textsf{SegMIC2CT} on the segmentation of \texttt{slice064} with slight and moderate combined artifacts. \texttt{np}: noise percentage, \texttt{bl}: bias level.}\label{fig:seg064}
\end{figure}
To point out the denoising effect of SegMIC2T based on our model, we produced a visual comparison against MICO and TVMICO on \texttt{slice064}, reported in Figure \ref{fig:seg064}. The figure shows results for two different levels of artifacts: a low level, with $5\%$ noise and no bias, and a moderate level, with $9\%$ noise and $40-$level bias. TVMICO seems comparable to SegMIC2T in segmenting noisy slices, but its effectiveness seems to decrease when the noise increases, as we can note in Figure \ref{fig:seg064} in the highlighted zooms. 
\setlength{\tabcolsep}{8pt}
\begin{table}[ht]
\begin{footnotesize}
    \centering \ra{1}  
    \begin{tabular}{@{}lccccr@{}} 
\toprule
\textsc{\textbf{Method}} & \multicolumn{4}{c}{\textsc{\textbf{Segmentation Metrics}}} & \textsc{\textbf{ROI}}\\
\cmidrule{2-5}
    & \footnotesize \texttt{Jaccard} & \footnotesize \texttt{Sensitivity} & \footnotesize \texttt{Specificity} & \footnotesize \texttt{Dice} &  \footnotesize \texttt{slice091}   \\
\toprule
 & \multicolumn{4}{c}{\footnotesize \texttt{np:} $5\%$, \texttt{bl:} $0$} &  \\
\hline

\multirow{3}{*}{MICO} & 0.831 & 0.913 & 0.992 & 0.908 & \textcolor{csf}{CSF}\\ 
                    & 0.721 & 0.833 & 0.965 & 0.838 &\textcolor{gm}{GM}\\
                    & 0.841 & 0.906 & 0.975 & 0.913 & \textcolor{wm}{WM}\\
\hline
\multirow{3}{*}{TVMICO} & 0.848 & 0.914 & 0.994 & 0.918& \textcolor{csf}{CSF}\\ 
                    & 0.846 & 0.923 & 0.980 & 0.917& \textcolor{gm}{GM}\\
                    & 0.926 & 0.965 & 0.987 & 0.962& \textcolor{wm}{WM}\\
\hline
\multirow{3}{*}{SegMIC2T} & 0.862 & 0.924 & 0.994 & 0.926& \textcolor{csf}{CSF}\\ 
                    & 0.844 & 0.918 & 0.980 & 0.916 &\textcolor{gm}{GM}\\
                    & 0.939 & 0.960 & 0.993 & 0.969 & \textcolor{wm}{WM}\\
\hline
& \multicolumn{4}{c}{\footnotesize \texttt{np:} $7\%$, \texttt{bl:} $20$} &  \\

\hline
\multirow{3}{*}{MICO} & 0.753 & 0.875 & 0.987 & 0.859 & \textcolor{csf}{CSF}\\ 
                    & 0.606 & 0.753 & 0.946 & 0.755& \textcolor{gm}{GM}\\
                    & 0.756 & 0.847 & 0.962 & 0.861& \textcolor{wm}{WM}\\
                    \hline
\multirow{3}{*}{TVMICO} & 0.825 & 0.902 & 0.992 & 0.904 & \textcolor{csf}{CSF}\\ 
                    & 0.828 & 0.912 & 0.978 & 0.906 & \textcolor{gm}{GM}\\
                    & 0.914 & 0.960 & 0.984 & 0.955 & \textcolor{wm}{WM}\\
\hline
\multirow{3}{*}{SegMIC2T} & 0.844 & 0.924 & 0.992 & 0.916 & \textcolor{csf}{CSF}\\ 
                    & 0.803 & 0.907 & 0.971 & 0.891 &\textcolor{gm}{GM}\\
                    & 0.904 & 0.926 & 0.992 & 0.949 & \textcolor{wm}{WM}\\ 
\hline
& \multicolumn{4}{c}{\texttt{np:} $9\%$ - \texttt{bl:} $40$} &  \\
\hline
\multirow{3}{*}{MICO} & 0.642 & 0.822 & 0.977 & 0.782 & \textcolor{csf}{CSF}\\ 
                    & 0.511 & 0.691 & 0.922 & 0.677 &\textcolor{gm}{GM}\\
                    & 0.675 & 0.766 & 0.957 & 0.806 & \textcolor{wm}{WM}\\
\hline
\multirow{3}{*}{TVMICO} & 0.758 & 0.862 & 0.989 & 0.862& \textcolor{csf}{CSF}\\ 
                    & 0.801 & 0.894 & 0.974 & 0.889 &\textcolor{gm}{GM}\\
                    & 0.899 & 0.953 & 0.981 & 0.947 & \textcolor{wm}{WM}\\
\hline 
\multirow{3}{*}{SegMIC2T} &
                    0.832 & 0.909 & 0.992 & 0.908 & \textcolor{csf}{CSF}\\
                    & 0.781 & 0.864 & 0.976 & 0.877 &\textcolor{gm}{GM}\\
                    & 0.896 & 0.946 & 0.982 & 0.945 & \textcolor{wm}{WM}\\
\bottomrule
    \end{tabular}\vspace{1mm}
    \caption{Segmentation metrics results on \texttt{slice091} corrupted by increasing noise percentages (\texttt{np}) and bias levels (\texttt{bl}).}
    \label{tab:metrics60}
\end{footnotesize}
\end{table}
For a broader comparison of the reliability and efficiency of SegMIC2T over MICO and TVMICO in segmenting the three tissues, the three algorithms were run on the four MRI brain slices in Table \ref{tab:ROIpixels}, corrupted with different combinations of noise, i.e., $5\%$, $7\%$, and $9\%$, and three bias levels, $0$, $20$, and $40$. This results in a total number of 36 test problems. We report in Figure~\ref{fig:boxplotall} boxplots for the three algorithms summarising the results we obtained in terms of the four segmentation metrics grouped by tissue type (CSF, GM, WM).   

\begin{figure}[ht!]
\centering
\begin{tabular}{c}
\hspace{-5mm}\includegraphics[width=0.49\columnwidth]{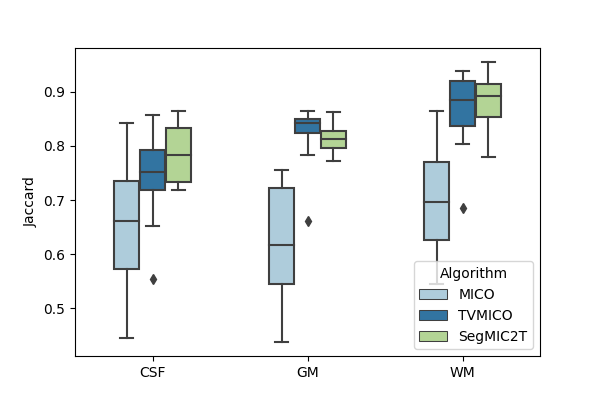} 
\includegraphics[width=0.49\columnwidth]{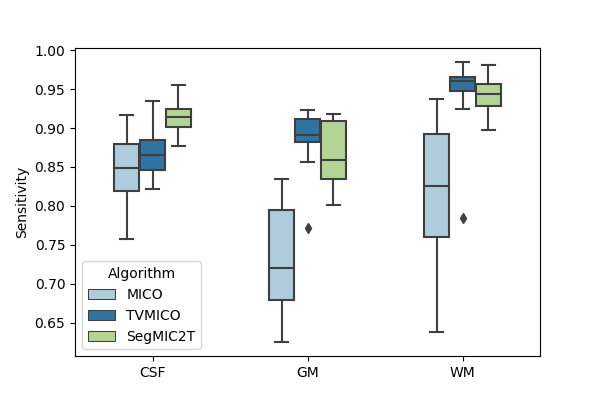} \\[2mm]
\hspace{-5mm}\includegraphics[width=0.49\columnwidth]{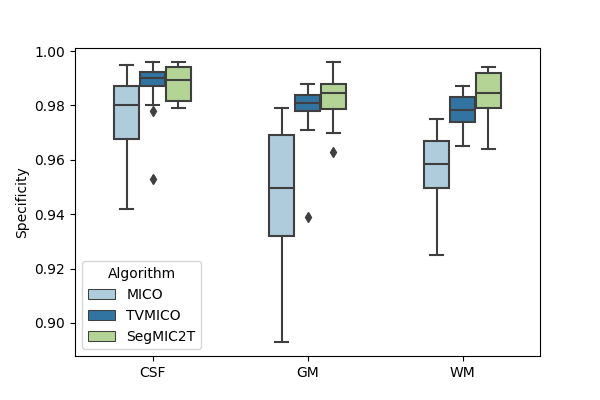}
\includegraphics[width=0.49\columnwidth]{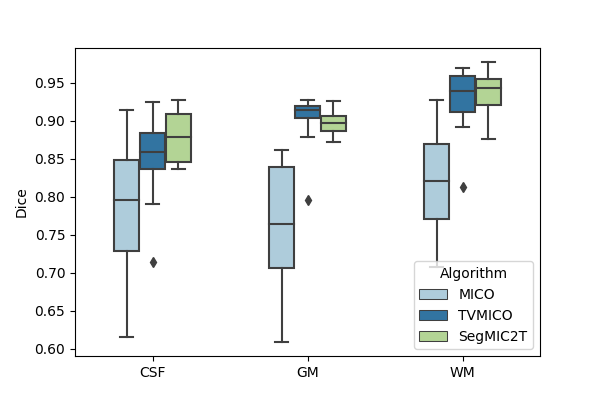}
\end{tabular}
\caption{Boxplots (grouped by ROI) for Jaccard (top left), Sensitivity  (top right), Specificity  (bottom left), and Dice  (bottom right) similarity coefficient for the segmentation of all the test problems.}\label{fig:boxplotall}
\end{figure} 

{From the boxplots, it is clear that MICO exhibits the worst metric results due to its sensitivity to noise. SegMIC2T outperforms both MICO and TVMICO in terms of \texttt{Specificity} on all three tissues and shows comparable performances with TVMICO on the other metrics. One interesting observation is that the proposed algorithm consistently outperforms the two competitors in the identification of the cerebrospinal fluid, especially in terms of specificity, i.e., true positive rate. In our opinion, this is an important feature of the proposed algorithm; indeed, despite the CSF being the smallest of the three tissues, its correct detection is essential in the diagnosis of certain diseases, such as pediatric hydrocephalus (see, e.g., \cite{Grimm2020}).
}

\section{Conclusions}\label{sec:conc}
{In summary, MRI segmentation is a critical step in analyzing the acquired slices corrupted by noise and other artifacts, especially in the context of brain imaging. Overcoming challenges such as intensity inhomogeneity, noise, and artifacts is essential for obtaining reliable results. Advancements in numerical methods and technology continue to play a significant role in improving the accuracy and efficiency of MRI segmentation for both clinical and research applications.\\
Here, we propose a new model aimed at removing noise and inhomogeneity of the image intensity. We introduced an ADMM method for the minimization of the nonlinear optimization problem with multiaffine constraints arising from the proposed model and analyzed its convergence properties. When combined with a K-means method for the segmentation of the corrected image, the proposed model seems to be effective in enhancing the segmentation of brain MRI slices affected by different noise levels and artifacts, surpassing several state-of-the-art methods for brain MRI segmentation in both qualitative and quantitative assessments. In future work, different function bases will be investigated to improve the correction step accuracy and efficiency, and a possible extension to 4D MRI segmentation could be considered.}

\section*{Acknowledgments}
The work was partially supported by
Italian Ministry of University and
Research (MIUR), PRIN Projects: \emph{Numerical Optimization with Adaptive Accuracy and Applications to Machine Learning
}, grant n. 2022N3ZNAX, \emph{A mathematical approach to inverse problems arising in cultural heritage preservation and dissemination
}, grant n. P2022PMEN2, and 
by Istituto Nazionale di Alta Matematica -
Gruppo Nazionale per il Calcolo Scientifico (INdAM-GNCS).\\[1mm] 
We would like to express our gratitude to Simona Sada (ICAR-CNR) for her technical support.

\end{document}